\newcommand{\D}[1]{\mathrm{D}_{#1}}						
\newcommand{\Dp}[1]{\frac{\partial}{\partial #1}}		
\newcommand{\Dt}[1]{\frac{\mathrm{d}}{\mathrm{d} #1}}	
\newcommand{\Ds}{\mathcal{D}}							
\newcommand{\Dred}[1]{\tilde{D}_{#1}}					
\newcommand{\R}{\mathbb{R}}				
\newcommand{\diffd}{\mathrm{d}}		    
\newcommand{\Id}{\mathrm{I}}			
\newcommand{\Hessapprox}{\mathcal{B}}	
\newcommand{\Rey}{\operatorname{Re}} 	
\newcommand{\AoA}{\alpha} 				
\newtheorem{theorem}{Theorem}
\newtheorem{definition}{Definition}
\newtheorem{lemma}{Lemma}
\newcommand{\proofofref}{}
\newproof{zproofof}{Proof of \proofofref}
\tikzstyle{Base} = [rectangle, draw, fill=blue!20, align=left]
\tikzstyle{Data} = [rectangle, draw, fill=red!20, align=left]
\tikzstyle{Spec} = [rectangle, draw, fill=green!20, align=left]
\tikzstyle{line} = [draw, ->]
\def\ps@pprintTitle{%
 \let\@oddhead\@empty
 \let\@evenhead\@empty
 \def\@oddfoot{}%
 \let\@evenfoot\@oddfoot}
\begin{document}

\begin{frontmatter}



\title{Combining Sobolev Smoothing with Parameterized Shape Optimization}


\author[inst1]{Thomas Dick\corref{cor1}}
\ead{thomas.dick@scicomp.uni-kl.de}
\author[inst1]{Nicolas R. Gauger}
\author[inst2]{Stephan Schmidt}

\address[inst1]{Chair for Scientific Computing, TU Kaiserslautern, Paul-Ehrlich-Strasse, Building 34, 67663 Kaiserslautern, Germany}
\address[inst2]{Department of Mathematics, Humboldt-Universität zu Berlin, Rudower Chaussee 25, 12489 Berlin, Germany}

\cortext[cor1]{Corresponding author}

\begin{abstract}
On the one hand, Sobolev gradient smoothing can considerably improve the performance of aerodynamic shape optimization and prevent issues with regularity. On the other hand, Sobolev smoothing can also be interpreted as an approximation for the shape Hessian. This paper demonstrates, how Sobolev smoothing, interpreted as a shape Hessian approximation, offers considerable benefits, although the parameterization is smooth in itself already. Such an approach is especially beneficial in the context of simultaneous analysis and design, where we deal with inexact flow and adjoint solutions, also called One Shot optimization. Furthermore, the incorporation of the parameterization allows for direct application to engineering test cases, where shapes are always described by a CAD model. The new methodology presented in this paper is used for reference test cases from aerodynamic shape optimization and performance improvements in comparison to a classical Quasi-Newton scheme are shown.
\end{abstract}

\begin{keyword}
shape optimization \sep smoothing \sep Hessian approximation \sep Sobolev gradient \sep parameterization
\end{keyword}

\end{frontmatter}


\section{Introduction}
\label{secintro}

It is well-known from the literature that Sobolev gradient smoothing improves the performance of aerodynamic shape optimization considerably \cite{Jameson94, Pironneau10}, because it counteracts the so-called loss of regularity \cite{Jameson04, Neuberger10}.
This is a phenomenon, where the smoothness of the unknown shape decreases progressively during the optimization. Naturally, using a very large design space, such as all surface mesh vertex positions, which is also called a free node parameterization, is more prone to this problem. There are several interpretations on why this occurs, one is not explicitly demanding the regularity of the search space as a constraint. Hence, using a coarser parameterization, which only produces shapes of sufficient regularity, circumvents this problem~\cite{Zingg02}. \\
The most popular solution to solve this problem is to apply an elliptic partial differential equation to smooth the gradient \cite{Schmidt13, Siebenborn21}. This is equivalent to a reinterpretation of the gradient in a search space of higher regularity. Algorithmically, the resulting Sobolev optimization scheme corresponds to an approximative Newton scheme, where the reduced Hessian is approximated by the respective Sobolev smoothing operator. Indeed, an elliptic partial differential equation often arises as the shape Hessian of many problems, in particular for problems governed by the perimeter \cite{Schmidt18}. This offers another interpretation on why Laplacian smoothing works very well in this context, because high-frequency noise increases the perimeter of the shape considerably. However, a more detailed analysis for a variety of flow problems also shows that elliptic differential equations form a major component of the reduced shape Hessian~\cite{JKusch18, Taasan96, Arian99, Schmidt09}. \\
The interpretation of the smoothing operator as a good approximation of the reduced shape Hessian suggests that this procedure will also be beneficial in the parameterized setting. As such, this paper presents how to combine parameterized aerodynamic shape optimization with shape Hessian approximations, in particular Sobolev smoothing. For this, we present a novel mathematical approach, based on the generalized Faà di Bruno formula \cite{Constantine96, Encinas03}. \\
The mathematical results are used to construct a new approximate Newton scheme with inexact gradients. Within the context of simultaneous analysis and design (SAND), we deal with intermediate flow and adjoint solutions, resulting in approximated values for the objective function and the gradient. This is also called One Shot optimization \cite{Guenther14, Oezkaya10}. The presented methodology, incorporating an analytic approximation of the Hessian, is expected to be superior in comparison to other approximations of the Hessian. This is because it does not suffer from noise when the Hessian is constructed from inexact gradients, as it would happen in Quasi-Newton methods. Overall, this leads to the new One Shot algorithm being superior compared to traditional BFGS-like algorithms. \\
Furthermore, combining shape Hessian approximations with parameterizations, as suggested in this paper, is especially relevant in engineering and industrial application. Here, a parameterization, which is easy to work with in all parts of the design process, i.e., conception, optimization, manufacturing, is required. Computer-aided design (CAD) is the standard representation method for this type of application, with a huge variety of possible, available parameterization methods. Our theoretical results are verbatim valid for such situations. \\
This paper is structured as follows. Section \ref{secadjcalc} recapitulates first-order optimality conditions and summarizes the adjoint calculus. Section \ref{sectionparameter} combines the free node Hessian, i.e., the second derivative w.r.t. the mesh vertex positions, with the Hessian of a parameterized shape optimization problem. This is done in such a way that the formulation is compatible with Sobolev smoothing, which is then introduced in Section \ref{sectionfunctionspaces}. The use of volume and surface smoothing for the parameterized formulation is discussed in Section \ref{secsurfvsvol}. In Section \ref{secoptalg}, we construct a One Shot scheme, which operates in the parameter space, but includes the Sobolev smoothing. We present numerical test cases in Section \ref{secnumeric}. The tests show beneficial convergence behavior of the new One Shot method in comparison to a classical Quasi-Newton scheme. \\
In addition, details about the respective implementations are provided in Appendix \hyperref[secAppendix]{A}.

\section{Adjoint Calculus and Parameterizations}
\label{secadjcalc}

When dealing with flow constrained problems we are usually interested in optimizing the shape of some flow obstacle, e.g., a wing, turbine blade, etc. We call a domain $\Omega \subset \R^{d}$, with boundary $\Gamma$ piecewise of class $\mathcal{C}^{2}(\R^{d},\R)$, the flow domain, see Figure \ref{picflowdomain}. The advantage of this manifold setting is that one could directly apply mathematical theory from shape optimization, without the need for a discretization.

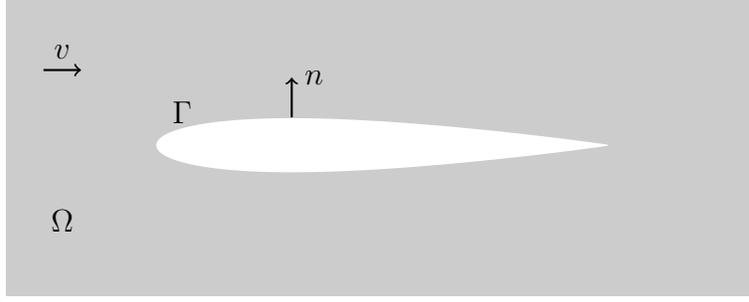
\begin{figure}[ht]
\center
\begin{tikzpicture}
 \node[rectangle, minimum width = 10cm, minimum height = 4cm, fill = black!20, xshift=3cm] {};
 \fill[scale=6, white] plot[smooth] file{Data/n0012.dat} -- cycle node (hole) {};
 \node[] at (-1.25cm, -1cm) {$\Omega$};
 \node[shift = {(0.2, 0.3)}] at (hole.north east) {$\Gamma$};
 \draw[->, thick] (1.8, 0.37) -- (1.8cm, 0.9cm) node[shift = {(0.3, 0)}]{$n$};
 \draw[->, thick] (-1.5cm, 1cm) -- node[midway, above] {$v$} (-1cm, 1cm);
\end{tikzpicture}
 \caption{Flow domain and design shape}
 \label{picflowdomain}
\end{figure}

Consider a vector of design parameters $p \in S_{p} \subset \R^{n_p}$, describing the shape of $\Gamma$, where $S_{p}$ is a set of parameters resulting in feasible designs.
These would normally be the coordinates of a shape parameterization. For example, Hicks-Henne functions \cite{Hicks78}, to deform the surface, or Free-Form Deformation (FFD) points \cite{Sederberg86}, where control points of a regular box around the shape are deformed and the surface coordinates are moved using a smooth spline interpolation. In a broader context, any coordinates of a CAD-based representation of our geometry may be used here.
\begin{definition}[discretized shape optimization problem]
\label{discreteOptProb}
Let $m \in \R^{d \cdot n_m}$ be the coordinates of a $d$-dimensional mesh triangulation of the domain $\Omega$, $F: \R^{n_u} \times \R^{d \cdot n_m} \rightarrow \R$ be a discretized objective function, $M: \R^{n_p} \rightarrow \R^{d \cdot n_m}$ be a parameterization of the computational mesh and $H: \R^{n_u} \times \R^{d \cdot n_m} \rightarrow \R^{n_u}$ a discretization of the flow equations on $\Omega$. Then
 \begin{equation}
 \begin{aligned}
  \min_{p \in S_p} \ & F(u,m) \\
  & m = M(p) \\
  & H(u,m)=0 \\
 \end{aligned}
 \end{equation}
is the discretized shape optimization problem.
\end{definition}
\textbf{Remark:} For the rest of this paper we assume a simplification. The flow equation can be written in a fixed point form $ H(u,m)=0 \Leftrightarrow u=G(u,m)$. For steady-state solutions, this is just a stationary point in the time marching scheme of the used finite volume solver. \\
Now, we can define the Lagrangian function $L : \R^{n_u} \times \R^{n_{\lambda}} \times \R^{d \cdot n_m} \rightarrow \R $,
\begin{equation}
 (u,\lambda,m) \mapsto  F(u,m) + \lambda^{T} \left( G(u,m)-u \right).
\end{equation}
We can now formulate Karush-Kuhn-Tucker (KKT) conditions w.r.t. design parameters, by utilizing the mesh equation $m=M(p)$ \cite[Pages 331-342]{NumOpt99}.
\begin{equation}
\label{KKTcondition}
 \begin{aligned}
 & \D{\lambda} L(u,\lambda,m) = 0  \Leftrightarrow  u =  G(u,m) & \text{(state equation)} \\
 & \D{u} L(u,\lambda,m) = 0  \Leftrightarrow \lambda = (\D{u} G(u,m))^T \lambda + (\D{u} F(u,m))^{T} & \text{(adjoint equation)} \\
 & \D{m} L(u,\lambda,M(p)) \D{p} M(p) = 0   \Leftrightarrow & \text{(design equation)} \\
  & \hspace{0.5cm}  0 = (\D{p} M(p))^{T} \left( (\D{m} G(u,M(p)))^T \lambda + (\D{m} F(u,M(p)))^{T} \right)
\end{aligned}
\end{equation}
Note, that we use the expression $\D{u}$ for the Jacobian matrix of a function with respect to variable $u$. These are well-known first-order optimality conditions and a variety of gradient-based optimization techniques is available to solve those equations. In this paper, a discrete adjoint equation is set up using algorithmic differentiation, to ensure that the adjoint equation is always consistent to the flow and turbulence model on a discrete level \cite{Albring15}. \\
Since we assume the existence and uniqueness of the flow solution, the value of the objective function is completely determined by the design parameters. This means, that it is possible to formulate Definition \ref{discreteOptProb} as an unconstrained optimization problem
 \begin{equation}
  \min_{p \in S_p} \tilde{F}(p),
 \end{equation}
where $\tilde{F}(p) := F(u(M(p)),M(p))$. We use this formulation to calculate expressions for the gradient and the Hessian of the objective w.r.t. the design parameters $p$, in Section \ref{sectionparameter}. It is important to remark that $\tilde{F}(p)$ always implicitly assumes the flow equations to be fulfilled. Additionally, if one assumes the adjoint equation to be fulfilled it is possible to show the following lemma.

\begin{lemma}
\label{lemmaadjcalc}
 Assume that $u,\lambda$ fulfill the flow and adjoint equations, then the following equation holds
 \begin{equation}
  \D{p} \tilde{F}(p) = \D{m} L(u,\lambda,M(p)) \D{p} M(p).
 \end{equation}
\end{lemma}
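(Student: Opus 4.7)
The plan is to apply the chain rule directly to $\tilde{F}(p) = F(u(M(p)), M(p))$ and then use the flow and adjoint relations to collapse the resulting expression into $\D{m} L \, \D{p} M$. Setting $m = M(p)$ and differentiating gives
\begin{equation*}
\D{p} \tilde{F}(p) = \D{u} F(u,m) \, \D{m} u(m) \, \D{p} M(p) + \D{m} F(u,m) \, \D{p} M(p),
\end{equation*}
which isolates the state sensitivity $\D{m} u$ as the only object that does not already appear on the desired right-hand side.

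To eliminate $\D{m} u$, I would implicitly differentiate the fixed-point form $u = G(u,m)$ with respect to $m$, obtaining $\D{m} u = \D{u} G \, \D{m} u + \D{m} G$, hence $\D{m} u = (\Id - \D{u} G)^{-1} \D{m} G$. The invertibility of $\Id - \D{u} G$ is the standard well-posedness assumption accompanying existence and uniqueness of $u$ (and of the adjoint $\lambda$); this is essentially the only non-trivial hypothesis used and is the main thing to justify, though it is implicit in the statement.

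Next, I would use the adjoint equation from \eqref{KKTcondition}, which I rewrite as $(\Id - \D{u} G)^T \lambda = (\D{u} F)^T$, to identify $\lambda^T = \D{u} F \, (\Id - \D{u} G)^{-1}$. Substituting this into the expression for $\D{p} \tilde{F}$ collapses the inverse and yields
\begin{equation*}
\D{p} \tilde{F}(p) = \bigl( \lambda^T \D{m} G(u,m) + \D{m} F(u,m) \bigr) \D{p} M(p).
\end{equation*}

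Finally, I would identify the bracketed factor with $\D{m} L(u,\lambda,m)$: differentiating $L(u,\lambda,m) = F(u,m) + \lambda^T(G(u,m) - u)$ with respect to $m$, while treating $u$ and $\lambda$ as independent variables, produces exactly $\D{m} F + \lambda^T \D{m} G$, since the $-u$ contribution has no explicit $m$-dependence. Re-substituting $m = M(p)$ gives the claim. Apart from the aforementioned invertibility, every step is a direct chain-rule computation, so I do not anticipate any further obstacle.
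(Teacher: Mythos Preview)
Your proof is correct and follows essentially the same approach as the paper: both combine the chain rule for $\tilde{F}$, the implicit function theorem applied to $u=G(u,m)$ to express $\D{m}u$, and the adjoint equation to replace $\D{u}F\,(\Id-\D{u}G)^{-1}$ by $\lambda^{T}$. The only cosmetic difference is that the paper starts from $\D{m}L\,\D{p}M$ and works towards $\D{p}\tilde{F}$, whereas you run the chain of equalities in the opposite direction.
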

\begin{proof}
 By definition and application of the chain rule
\begin{equation*}
 \D{m} L(u,\lambda,M(p)) \D{p} M(p) = \left( \lambda^{T} \D{m} G(u,m) + \D{m} F(u,m) \right) \D{p} M(p).
\end{equation*}
Now apply the adjoint calculus, by solving the adjoint equation for $\lambda$ and inserting it
\begin{equation*}
 \lambda^{T} \D{m} G(u,m) + \D{m} F(u,m) = \D{u} F(u,m) (\D{u} \left( G(u,m) - u \right))^{-1} \D{m} G(u,m) + \D{m} F(u,m).
\end{equation*}
Application of the implicit function theorem yields
\begin{align*}
 \D{u} F(u,m) (\D{u} \left( G(u,m) - u \right))^{-T} \D{m} G(u,m) + \D{m} F(u,m) = \D{u} F(u,m) \D{m} u + \D{m} F(u,m).
\end{align*}
Altogether, we get
\begin{equation*}
 \D{m} L(u,\lambda,M(p)) \D{p} M(p) = \left( \D{u} F(u,m) \D{m} u + \D{m} F(u,m) \right) \D{p} M(p) =  \D{p} \tilde{F}(p).
\end{equation*}
\end{proof}

Therefore, the derivative of $\tilde{F}$ w.r.t. $p$ is a reduced derivative via the implicit function theorem.

\section{Hessians and Parameterization}
\label{sectionparameter}

The shape Hessian can be defined either in a continuous or in a discrete setting. In the introduction, we discussed how in engineering applications the geometry is usually not described by a shape $\Gamma$. Instead, a parameterization via a Computer-aided design (CAD) software is used. This allows the designer to apply changes in a more intuitive way, i.e., change the thickness or length of a component instead of moving individual mesh nodes. Also, for industrial applications such CAD-based representations are the common standard, as they allow multiple simulations with varying software tools, e.g., flow, elasticity, or heat transfer simulations, independent of certain mesh file formats. It is important to obtain optimized designs in these terms to ensure that they can be manufactured. For our application, this means we are dealing with a set of design parameters $p \in \mathbb{R}^{n_p}$, as in Definition \ref{discreteOptProb}. If we want to discretize the PDE from Theorem \ref{theoremSobolev}, we have two paradigms.
\begin{enumerate}
 \item The mesh for the CFD computation is our discretization of the shape, implying that we need to formulate a discrete version of the shape Hessian on this level. 
 \item We are interested in sensitivities w.r.t. the parameters $p$ since we rely on optimizing those values for industrial application.
\end{enumerate}

Traditionally, one has worked with the second-order derivatives of the Lagrangian w.r.t. the mesh coordinates as the shape Hessian, i.e., $\D{mm} L(u,\lambda,m)$. We now discuss how to translate this into the situation of a parameterized mesh deformation, where we need to calculate $\D{pp} \tilde{F}(p)$, in such a way that it is compatible with Sobolev smoothing.

\begin{theorem}[discrete parameterization of the shape Hessian]
\label{theomainresult}
 Assume that $\tilde{F}:\R^{n_p} \rightarrow \R, p \mapsto y$ is twice continuously differentiable, for $p \in S_{p} \subset \R^{n_p}$ in the space of possible parameters, then for the second order derivatives of $\tilde{F}$ the following equation holds
 \begin{equation}
 \label{eqmainresult}
  \D{pp} \tilde{F}(p) = \D{p}M(p)^{T} \D{mm} L(u, \lambda, m) \, \D{p}M(p) + \sum_{k=1}^{d \cdot n_{m}} \Dp{m_{k}} L(u, \lambda, m) \, \D{pp} M_{k}(p).
 \end{equation}
Here, $m_{k}$ denotes the $k$-th component of the respective vector $m = M(p)$.
\end{theorem}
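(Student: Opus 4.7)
The plan is to reduce equation \eqref{eqmainresult} to a direct application of the second-order chain rule for a scalar-valued function composed with a vector-valued map, i.e., the generalized Faà di Bruno formula cited in the introduction \cite{Constantine96, Encinas03}. To this end, I would rewrite $\tilde{F}(p) = \hat{F}(M(p))$, where $\hat{F}(m) := F(u(m), m)$ uses the implicit function $u(m)$ defined by the state equation $u = G(u, m)$. The hypothesis $\tilde{F} \in C^2(S_p)$ together with the implicit function theorem for $u(m)$ then justifies all differentiations performed below.

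Applying the second-order Faà di Bruno formula to $\tilde{F} = \hat{F} \circ M$ directly yields
\begin{equation*}
\D{pp} \tilde{F}(p) = \D{p} M(p)^T \, \D{mm} \hat{F}(M(p)) \, \D{p} M(p) + \sum_{k=1}^{d \cdot n_m} \Dp{m_k} \hat{F}(M(p)) \, \D{pp} M_k(p),
\end{equation*}
which already reproduces the structure of equation \eqref{eqmainresult} with $\hat{F}$ in place of $L$. The remaining work is therefore to translate derivatives of the reduced objective $\hat{F}$ into derivatives of the Lagrangian $L(u,\lambda,m)$. For the first derivative, the identity $\D{m} \hat{F}(m) = \D{m} L(u, \lambda, m)$ is exactly the content of Lemma \ref{lemmaadjcalc} in the special case where $M$ is the identity, so taking the $k$-th component gives $\Dp{m_k} \hat{F}(m) = \Dp{m_k} L(u, \lambda, m)$. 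For the second derivative, I would then differentiate $\D{m} L(u(m), \lambda(m), m)$ once more in $m$ and use the state and adjoint equations to cancel the implicit contributions from $\D{m} u$ and $\D{m} \lambda$, leaving the identification $\D{mm} \hat{F}(m) = \D{mm} L(u, \lambda, m)$ in the sense understood by the theorem.

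The main obstacle is the bookkeeping in this second adjoint step, since the Faà di Bruno identity itself is routine once the composition structure is made explicit. The cross terms produced when differentiating through $u(m)$ and $\lambda(m)$ are exactly those handled by the implicit function theorem argument used in the proof of Lemma \ref{lemmaadjcalc}, now applied at one order higher; the convention adopted in the theorem statement is that $\D{mm} L(u,\lambda,m)$ denotes the resulting shape Hessian evaluated at the KKT triple. Once this identification is in place, substitution into the Faà di Bruno expression above produces equation \eqref{eqmainresult} verbatim, completing the proof.
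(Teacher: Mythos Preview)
Your approach is essentially the same as the paper's. The paper starts from Lemma~\ref{lemmaadjcalc}, writes the first derivative componentwise as $\sum_k \Dp{m_k} L \cdot \Dp{p_i} M_k$, and then differentiates once more using the product and chain rules in coordinates---this is exactly the second-order Faà di Bruno identity you invoke as a ready-made formula, just carried out by hand. One minor point: the identification $\D{mm}\hat{F} = \D{mm} L$ is not obtained by cancelling the $\D{m}u$ and $\D{m}\lambda$ contributions via the state and adjoint equations; rather, the paper simply uses $\D{mm} L$ as notation for the reduced Hessian with respect to the mesh (it says so explicitly just after the proof), so the identification is definitional. The paper's own proof glosses over this distinction in precisely the same way, so your treatment is no less rigorous than the original.
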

\begin{proof}
For the gradient of $\tilde{F}$, Lemma \ref{lemmaadjcalc} gives a connection to the derivatives of $L$. Application of the chain rule to the $i$-th component yields
\begin{equation*}
 \Dt{p_{i}} \tilde{F}(p) = \left( \D{m} L(u, \lambda, M(p)) \D{p} M(p) \right)_{i} = \sum_{k=1}^{d \cdot n_{m}} \Dp{m_{k}} L(u, \lambda, M(p)) \, \Dp{p_{i}} M_{k}(p).
\end{equation*}
Once again, the adjoint calculus with the Lagrangian is crucial to remove dependencies on the flow state $u$ and to transform total into partial derivatives. \\
Now take a look at the $i,j$-th component of the second-order derivative, i.e., the Hessian matrix.
\begin{align*}
 \frac{\diffd^{2}}{\diffd p_{i} \diffd p_{j}} \tilde{F}(p) & = \Dt{p_{i}} \left( \Dt{p_{j}}  \tilde{F}(p) \right) \\
  & = \Dt{p_{i}} \left( \sum_{k=1}^{d \cdot n_{m}} \Dp{m_{k}} L(u, \lambda, M(p)) \, \Dp{p_{j}} M_{k}(p) \right)
\end{align*}
Applying the product rule to each component of this sum results in
\begin{align*}
 \frac{\diffd^{2}}{\diffd p_{i} \diffd p_{j}} \tilde{F}(p) & = \sum_{k=1}^{d \cdot n_{m}} \left( \frac{\diffd \partial}{\diffd p_{i} \partial m_{k}} L(u, \lambda, M(p)) \, \Dp{p_{j}} M_{k}(p) + \Dp{m_{k}} L(u, \lambda, M(p)) \, \frac{\partial^{2}}{\partial p_{i} \partial p_{j}} M_{k}(p) \right)
\end{align*}
For the mixed second order derivative $\frac{\diffd \partial}{\diffd p_{i} \partial m_{k}} L(u, \lambda, m)$ the chain rule can be applied again to replace $p_{i}$ with terms of $m$.
\begin{align*}
 \frac{\diffd^{2}}{\diffd p_{i} \diffd p_{j}} \tilde{F}(p) = & \sum_{k=1}^{d \cdot n_{m}} \left( \sum_{l=1}^{d \cdot n_{m}} \frac{\partial^{2}}{\partial m_{l} \partial m_{k}} L(u, \lambda, M(p)) \, \Dp{p_{i}} M_{l}(p) \Dp{p_{j}} M_{k}(p) \right) \\
 & + \sum_{k=1}^{d \cdot n_{m}} \Dp{m_{k}} L(u, \lambda, M(p)) \, \frac{\partial^{2}}{\partial p_{i} \partial p_{j}} M_{k}(p)
\end{align*}
Reordering the scalar multiplications yields
\begin{align*}
 \frac{\diffd^{2}}{\diffd p_{i} \diffd p_{j}} \tilde{F}(p) = & \sum_{k=1}^{d \cdot n_{m}} \sum_{l=1}^{d \cdot n_{m}} \Dp{p_{i}} M_{l}(p) \frac{\partial^{2}}{\partial m_{l} \partial m_{k}} L(u, \lambda, M(p)) \, \Dp{p_{j}} M_{k}(p) \\
 & + \sum_{k=1}^{d \cdot n_{m}} \Dp{m_{k}} L(u, \lambda, M(p)) \, \frac{\partial^{2}}{\partial p_{i} \partial p_{j}} M_{k}(p).
\end{align*}
The appearing sums are just matrix-vector multiplications. This means that the overall expression is
\begin{align*}
 \D{pp} \tilde{F}(p) & = \D{p} M(p)^{T} \D{mm} L(u, \lambda, M(p)) \, \D{p} M(p) + \sum_{k=1}^{d \cdot n_{m}} \Dp{m_{k}} L(u, \lambda, M(p)) \, \D{pp} M_{k}(p).
\end{align*}
Overall, this concludes the proof.
\end{proof}

The above can be interpreted as a special case of the generalized Faà di Bruno formula~\cite{Constantine96, Encinas03}.

Theorem \ref{theomainresult} connects the second-order derivatives with respect to the mesh $m$ and the parameters $p$. The second term of the right-hand side of Equation \eqref{eqmainresult} can be neglected in most cases, since many parameterizations used in industrial and engineering applications are linear in the design parameters. We will discuss the exact form of these derivatives and their role in more details in Section \ref{secsurfvsvol}. This is a useful feature in a design process, where gradual changes are applied to study the behavior of the system. Naturally, one wants this to be equivalent to one large overall change, i.e., linear in the design parameter. \\
Overall, it is reasonable to approximate the Hessian by the first term of the sum in Equation \eqref{eqmainresult} only. To do a Newton step, all we are missing is the first-order derivative of $\tilde{F}(p)$. It can be expressed simply by multiplication of Jacobian matrices, as seen in Lemma \ref{lemmaadjcalc}, together with the chain rule.
\begin{equation}
 \label{eqfirstordoperator}
  \D{p} \tilde{F}(p) = \D{m} L(u,\lambda,M(p)) \D{p} M(p)
\end{equation}
By using Equations \eqref{eqmainresult} and \eqref{eqfirstordoperator} in a Newton step for the parameterized problem, i.e.,
\begin{equation} 
 \D{pp} \tilde{F}(p) w = -\D{p} \tilde{F}(p)^{T}
\end{equation}
is now transformed into
\begin{equation}
 \D{p}M(p)^T \D{mm} L(u,\lambda,M(p)) \D{p}M(p) w = - \D{p}M(p)^{T} \, \D{m} L(u, \lambda, M(p))^{T}.
\end{equation}
Here $\D{mm} L(u,\lambda,M(p))$ is the reduced, discrete Hessian with respect to the mesh. Together this gives an approximation for the Hessian with respect to the design parameters, thereby eliminating the conflicting objectives described above. Any suitable approximation of the shape Hessian on the mesh $\Hessapprox \approx \D{mm} L(u,\lambda,M(p))$ can now be applied for a Quasi-Newton step $w$ in the parameter space via the formula
\begin{equation}
\label{newtonstep}
 \D{p}M(p)^T \Hessapprox \, \D{p}M(p) w = - \D{p}M(p)^{T} \, \D{m} L(u, \lambda, M(p))^{T}.
\end{equation}

\section{Sobolev smoothing on function spaces}
\label{sectionfunctionspaces}

The idea of Sobolev gradient smoothing is based on the connection between gradients in different function spaces. It is well known from functional analysis, that the gradient as a mathematical object is dependent on the scalar product of the underlying Hilbert space. To illustrate this point further, we will introduce the relevant spaces for our application \cite{Adams03}. \\
Note, that we present the method for scalar-valued functions. When dealing with vector-valued functions, Sobolev smoothing can be generalized naturally by a componentwise application.
\begin{definition}[function spaces]
We define the scalar product
\begin{equation}
 \langle f, g \rangle_{\mathrm{L}^{2}} = \int_{\Omega} \! f(x)g(x) \, \mathrm{d}x
\end{equation}
and call the vector space
\begin{equation}
 \mathrm{L}^{2}(\Omega) := \left\{ f:\Omega\rightarrow \mathbb{R}^d \, \bigg\vert \, \langle f, f \rangle_{\mathrm{L}^{2}}^{\frac{1}{2}} < \infty \right\}
\end{equation}
the space of square-integrable functions. In addition, we define the scalar product
\begin{equation}
 \langle f, g \rangle_{\mathrm{H}^{1}} = \int_{\Omega} \! f(x)g(x) \, \mathrm{d}x + \int_{\Omega} \! \langle \nabla f(x), \nabla g(x) \rangle_{2} \, \mathrm{d}x
\end{equation}
where $\nabla f(x)$ is the weak derivative of $f(x)$ and the vector space
\begin{equation}
 \mathrm{H}^{1}(\Omega) := \left\{ f \in \mathrm{L}^{2}(\Omega) \, \bigg\vert \, \langle f, f \rangle_{\mathrm{H}^{1}}^{\frac{1}{2}} < \infty \right\}
\end{equation}
the Sobolev space of weakly differentiable functions.
\end{definition}
Furthermore, we know that $\mathrm{H}^{1}(\Omega) \subset \mathrm{L}^{2}(\Omega)$ and that the directional derivative in an arbitrary direction $v \in \mathrm{H}^{1}(\Omega)$, defined as
 \begin{equation}
  \Ds f(x; v) := \lim_{\varepsilon \rightarrow 0} \frac{f(x+\varepsilon v) - f(x)}{\varepsilon},
 \end{equation}
is a unique linear mapping on $\mathrm{H}^{1}(\Omega)$. Indeed by Riesz representation theorem, we know that the directional derivative and the gradient are directly linked together.
\begin{theorem}[Riesz representation of the gradient]
\label{corRiesz}
Let $f: \mathrm{H} \rightarrow \mathbb{R}$ be a differentiable function on a Hilbert space $\mathrm{H}$, with inner product $\langle \cdot , \cdot \rangle$, then there exists a unique representative $\nabla f$ such that
\begin{equation}
  \Ds f(x; v) = \langle \nabla f , v \rangle_{\mathrm{H}}.
\end{equation}
We call this representative the gradient of f.
\end{theorem}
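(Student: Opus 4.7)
The plan is to reduce the claim to the classical Riesz representation theorem for Hilbert spaces, which states that every bounded linear functional $\ell : \mathrm{H} \to \mathbb{R}$ admits a unique representative $y \in \mathrm{H}$ with $\ell(v) = \langle y, v \rangle_{\mathrm{H}}$. The whole proof then consists of showing that the map $v \mapsto \Ds f(x; v)$ defines such a bounded linear functional, and then naming the representative $\nabla f$.

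First, I would verify linearity of the directional derivative in the direction argument. Because $f$ is assumed differentiable on $\mathrm{H}$, the Gâteaux (and in fact Fréchet) derivative at $x$ exists, so $\Ds f(x; \cdot)$ coincides with the action of the derivative on the direction. Additivity $\Ds f(x; v_1 + v_2) = \Ds f(x; v_1) + \Ds f(x; v_2)$ and homogeneity $\Ds f(x; \alpha v) = \alpha \Ds f(x; v)$ then follow directly from the limit definition together with linearity of the Fréchet derivative. Second, boundedness is immediate from the definition of Fréchet differentiability: the derivative is by construction a continuous linear map from $\mathrm{H}$ to $\mathbb{R}$, hence there exists a constant $C \ge 0$ with $|\Ds f(x; v)| \le C \|v\|_{\mathrm{H}}$ for all $v \in \mathrm{H}$.

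With these two properties, the Riesz representation theorem applies and yields a unique $\nabla f \in \mathrm{H}$ such that
\begin{equation*}
 \Ds f(x; v) = \langle \nabla f, v \rangle_{\mathrm{H}} \quad \text{for all } v \in \mathrm{H}.
\end{equation*}
Uniqueness also follows directly from the non-degeneracy of the inner product: if $\langle g_1, v \rangle_{\mathrm{H}} = \langle g_2, v \rangle_{\mathrm{H}}$ for all $v$, then choosing $v = g_1 - g_2$ gives $\|g_1 - g_2\|_{\mathrm{H}}^2 = 0$.

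There is really no obstacle beyond bookkeeping here: the content of the theorem is just a relabeling of Riesz representation once one observes that differentiability supplies both linearity and continuity of $\Ds f(x; \cdot)$. If one wanted to weaken the hypothesis from Fréchet to mere Gâteaux differentiability, then boundedness would become the genuinely non-trivial step and would need to be assumed or proved separately; the statement as given sidesteps this by requiring full differentiability.
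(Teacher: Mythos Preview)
Your proposal is correct and is the standard argument: verify that $v \mapsto \Ds f(x;v)$ is a bounded linear functional and then invoke the classical Riesz representation theorem. The paper itself does not supply a proof of this statement at all; it is stated as a known result and used as a black box in the subsequent Theorem~\ref{theoremSobolev}, so your write-up is more detailed than anything the paper provides.
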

One should note that in the case of a finite-dimensional, real-valued vector space, i.e., $\mathbb{R}^{n}$, with the standard scalar product, this leads to the well-known connection between the Jacobian matrix and the gradient.
\begin{equation}
  \D{x}f^{T} = \nabla f
\end{equation}
Furthermore, since the gradient depends on the underlying Hilbert space we can mark this in the notation.
\begin{equation}
 \begin{aligned}
 \nabla^{\mathrm{L}} f : & \ \ \text{gradient in} \ \mathrm{L}^{2}(\Omega) \\
 \nabla^{\mathrm{H}} f : & \ \ \text{gradient in} \ \mathrm{H}^{1}(\Omega)
 \end{aligned}
\end{equation}
The following Theorem \ref{theoremSobolev} demonstrates the connection between the two gradients and introduces the basic equation for Sobolev smoothing.
\begin{theorem}[Sobolev Smoothing]
\label{theoremSobolev}
 For a function $f \in \mathrm{H}^{1}(\Omega)$, which fulfills
 \begin{equation}
  \label{sobolevPDEbound}
   \frac{\partial}{\partial n} \nabla^{\mathrm{H}} f = 0  \ \text{ on } \partial \Omega,
 \end{equation}
 the following equation holds
 \begin{equation}
 \label{sobolevPDE}
  (\mathrm{I} - \triangle) \nabla^{\mathrm{H}} f = \nabla^{\mathrm{L}} f  \ \text{ on } \Omega.
 \end{equation}
\end{theorem}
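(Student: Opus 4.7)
The plan is to exploit the uniqueness of the Riesz representative. Since the directional derivative $\Ds f(x;v)$ is a single linear functional on $\mathrm{H}^1(\Omega)$, Theorem \ref{corRiesz} gives two representations of the same quantity for every test direction $v \in \mathrm{H}^1(\Omega)$, namely
\begin{equation*}
\langle \nabla^{\mathrm{H}} f, v \rangle_{\mathrm{H}^1} \;=\; \Ds f(x;v) \;=\; \langle \nabla^{\mathrm{L}} f, v \rangle_{\mathrm{L}^2}.
\end{equation*}
Writing out the $\mathrm{H}^1$ inner product explicitly gives
\begin{equation*}
\int_{\Omega} \nabla^{\mathrm{H}} f \cdot v \, \mathrm{d}x + \int_{\Omega} \langle \nabla(\nabla^{\mathrm{H}} f), \nabla v \rangle_{2} \, \mathrm{d}x = \int_{\Omega} \nabla^{\mathrm{L}} f \cdot v \, \mathrm{d}x.
\end{equation*}

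The next step is to move the derivative off $v$ by Green's first identity applied to the middle term. This produces a volume contribution $-\int_{\Omega} v \, \triangle \nabla^{\mathrm{H}} f \, \mathrm{d}x$ plus a boundary term $\int_{\partial \Omega} v \, \partial_{n} \nabla^{\mathrm{H}} f \, \mathrm{d}S$. This is where assumption \eqref{sobolevPDEbound} is used: the Neumann condition kills the boundary integral exactly, so the identity reduces to
\begin{equation*}
\int_{\Omega} v \, \bigl[ (\mathrm{I} - \triangle) \nabla^{\mathrm{H}} f - \nabla^{\mathrm{L}} f \bigr] \, \mathrm{d}x = 0 \quad \text{for all } v \in \mathrm{H}^{1}(\Omega).
\end{equation*}
Since the test space $\mathrm{H}^{1}(\Omega)$ is dense in $\mathrm{L}^{2}(\Omega)$, the fundamental lemma of the calculus of variations then forces the bracketed expression to vanish almost everywhere on $\Omega$, yielding \eqref{sobolevPDE}.

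The main obstacle is the regularity needed to make $\triangle \nabla^{\mathrm{H}} f$ a pointwise object and to legitimately perform integration by parts. A priori $\nabla^{\mathrm{H}} f$ only lives in $\mathrm{H}^{1}(\Omega)$, so the identity initially has to be read in the weak sense, with \eqref{sobolevPDE} holding as a distributional equation and \eqref{sobolevPDEbound} as a natural (Neumann-type) boundary condition in the variational formulation. Interpreting the statement classically requires invoking standard elliptic regularity for the Helmholtz-type operator $\mathrm{I} - \triangle$: on a sufficiently smooth boundary $\partial \Omega$, the solution gains two derivatives and $\nabla^{\mathrm{H}} f \in \mathrm{H}^{2}(\Omega)$, which makes both sides of \eqref{sobolevPDE} well defined in $\mathrm{L}^{2}(\Omega)$ and the boundary trace in \eqref{sobolevPDEbound} meaningful in $\mathrm{H}^{1/2}(\partial\Omega)$. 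Apart from this regularity bookkeeping, the argument is essentially a one-line application of Riesz duality combined with Green's identity.
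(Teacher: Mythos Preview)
Your argument is correct and follows essentially the same route as the paper's proof: Riesz representation in both inner products, expansion of the $\mathrm{H}^1$ pairing, integration by parts using the Neumann condition \eqref{sobolevPDEbound}, and a density/variational argument to conclude. Your added paragraph on regularity (weak versus classical interpretation, elliptic regularity for $\mathrm{I}-\triangle$) is a worthwhile clarification that the paper's own proof leaves implicit.
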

\begin{proof}
Note that in Equation \eqref{sobolevPDE} the differential operator acts componentwise on the gradient. By Theorem (\ref{corRiesz}), we know that the directional derivative can be expressed in terms of a scalar product with the gradient.
\begin{equation*}
  \Ds f(x; v) = \langle \nabla^{\mathrm{H}} f, v \rangle_{\mathrm{H}} =  \int_{\Omega} \! (\nabla^{\mathrm{H}} f) v \, \mathrm{d}x + \int_{\Omega} \!  \nabla^{\mathrm{L}} (\nabla^{\mathrm{H}} f) \, \nabla^{\mathrm{L}} v \, \mathrm{d}x
\end{equation*}
For the second integral in the equation, we use integration by parts with a zero Neumann boundary as stated in Equation \eqref{sobolevPDEbound}.
\begin{equation*}
  \int_{\Omega} \! (\nabla^{\mathrm{H}} f) v \, \mathrm{d}x + \int_{\Omega} \!  \nabla^{\mathrm{L}} (\nabla^{\mathrm{H}} f) \, \nabla^{\mathrm{L}} v \, \mathrm{d}x =  \int_{\Omega} \! (\nabla^{\mathrm{H}} f) v \, \mathrm{d}x - \int_{\Omega} \!  \triangle(\nabla^{\mathrm{H}} f) \, v \, \mathrm{d}x
\end{equation*}
This can be rewritten using scalar products, resulting in
\begin{equation*}
\int_{\Omega} \! (\nabla^{\mathrm{H}} f) v \, \mathrm{d}x - \int_{\Omega} \!  \triangle(\nabla^{\mathrm{H}} f) \, v \, \mathrm{d}x = \langle \nabla^{\mathrm{H}} f, v \rangle_{\mathrm{L}} - \langle \triangle (\nabla^{\mathrm{H}} f) ,  v \rangle_{\mathrm{L}} = \langle (\mathrm{I} - \triangle) \nabla^{\mathrm{H}} f ,  v \rangle_{\mathrm{L}}.
\end{equation*}
Since the directional derivative is a unique linear mapping on $\mathrm{H}^{1}(\Omega)$ and $\mathrm{L}^{2}(\Omega)$, and $\mathrm{H}^{1}(\Omega) \subset \mathrm{L}^{2}(\Omega)$, by Riesz representation theorem we get the following equation
\begin{equation*}
 \langle \nabla^{\mathrm{L}} f, v \rangle_{\mathrm{L}} = \Ds f(x; v) = \langle \nabla^{\mathrm{H}} f, v \rangle_{\mathrm{H}}.
\end{equation*}
Plugin this into the previous equation we get the following statement
\begin{equation*}
\forall v \in \mathrm{H}^{1}(\Omega) \ : \  \langle (\mathrm{I} - \triangle) \nabla^{\mathrm{H}} f ,  v \rangle_{\mathrm{L}} = \langle \nabla^{\mathrm{L}} f, v \rangle_{\mathrm{L}},
\end{equation*}
which implies the terms are equal outside of sets with measure 0
\begin{equation*}
 \Rightarrow (\mathrm{I} - \triangle) \nabla^{\mathrm{H}} f = \nabla^{\mathrm{L}} f.
\end{equation*}
\end{proof}

The connection to Newton methods becomes apparent if we take a look at the update step for an iterative optimizer. Assume we want to use Newton's method for minimizing $f(x)$, then a single step looks like
\begin{equation}
 x_{k+1} = x_{k} + \delta x \quad \text{with} \quad \D{xx}f(x_k) \delta x = -\nabla^{\mathrm{L}} f(x_k).
\end{equation}
On the other hand, if we do a steepest descent step with the Sobolev gradient the resulting formula is
\begin{equation}
 x_{k+1} = x_{k} + \delta x \quad \text{with} \quad \delta x = - \nabla^{\mathrm{H}} f(x_k).
\end{equation}
Combined with Equation \eqref{sobolevPDE} we get that
\begin{equation}
\begin{aligned}
 \delta x = - (\D{xx}f(x_k))^{-1} \nabla^{\mathrm{L}} f(x_k) \quad \text{for Newton,} \\
 \delta x = - (\mathrm{I} - \triangle)^{-1} \nabla^{\mathrm{L}} f(x_k) \quad \text{for Sobolev.} \\
\end{aligned}
\end{equation}
This motivates an approximation of the Hessian by Sobolev smoothing.

How to proceed depends on whether one considers the motion of the volume vertices as independent, or as dependent on the motion of the boundary vertices only.
Normally, the analytic Hessian only depends on the surface deformation, as movement inside the flow volume does not change the objective function \cite{Schmidt18}. Therefore, a discretized volume approximation will have a rank deficit and be ill-posed. However, when replacing the Hessian with a smoothing operator it seems possible to either smooth the sensitivities utilizing a volumetric operator or to smooth along the surface only. See the next Section \ref{secsurfvsvol} for more details on the surface and volume formulations. \\
Here, we should remark that this also influences the choice of boundary conditions. If we perform the Sobolev smoothing on the design surface $\Gamma$, we can keep the Neumann boundary condition from Theorem \ref{theoremSobolev}. If the Sobolev smoothing is performed on the whole volume mesh, one usually asks for $\nabla^{\mathrm{H}} f = 0$ on the outer freestream boundary, since only the design boundary should be deformed. Notice that this is a stronger assumption than having homogeneous Neumann boundaries everywhere. \\
We demonstrate the formulation of the Sobolev smoothing algorithm on the surface by using the Laplace-Beltrami operator. Assume we have a discretization of this operator on the surface nodes $(\mathrm{I} - \triangle_{\Gamma})$, then we can use it to approximate the reduced Hessian of the Lagrangian function
\begin{equation}
\label{sobolevApprox}
 \D{mm} L(u,\lambda,M(p)) \approx (\mathrm{I} - \triangle_{\Gamma}).
\end{equation}
If we combine this with Equation \eqref{newtonstep}, we get the formulation for Sobolev smoothing on the parameter level
\begin{equation}
\label{smoothingequation}
 \D{p}M(p)^T (\mathrm{I} - \triangle_{\Gamma}) \D{p}M(p) w = - \D{p}M(p)^{T} \, \D{m} L(u, \lambda, M(p))^{T}.
\end{equation}
This formula offers a way to formulate an approximated Newton scheme w.r.t the parameters, even though the design parameters do not necessarily come from a Hilbert space structure. In addition, we can introduce weights $\epsilon_{1}, \epsilon_{2}$ into the equation, to better adapt the operator to a given test case.
\begin{equation}
\label{completesmoothingequation}
 D_{p}M(p)^T (\epsilon_1 \mathrm{I} - \epsilon_2 \triangle_{\Gamma}) D_{p}M(p) \delta p = - \D{p}M(p)^{T} \, \D{m} L(u, \lambda, M(p))^{T}
\end{equation}
For ease of notation, we will denote the entire left-hand side matrix from this equation by
\begin{equation}
\label{systemmatrixequation}
 B := D_{p}M(p)^T (\epsilon_1 \mathrm{I} - \epsilon_2 \triangle_{\Gamma}) D_{p}M(p).
\end{equation}
At this point, we would also like to mention similar smoothing approaches found throughout the literature. Most of them consider applying a partial differential equation to the search direction to change the space in which we perform a descent step and thereby achieve a higher regularity \cite{Schmidt11}. One example is the paper of Kusch, et.al. \cite{JKusch18} on the drag minimization problem for Euler equations, where the authors derive the operator symbol of the Hessian matrix. Another example is the work of Ta'asan \cite{Taasan96}. While this only deals with potential flows and inviscid flows, e.g., situations governed by Euler equations, it offers detailed and deep insight into the theoretical side of the problem. A more recent paper by Müller, Kühl, et.al. \cite{Siebenborn21} suggests solving a p-Laplace problem as a relaxation for the steepest descent step. This is done in a free node optimization setting and can deal effectively with situations where the optimum might contain points or edges in the shape. \\
Finally, we also have to consider the choice of the weights $\epsilon_{1}, \epsilon_{2}$ in Equation \eqref{completesmoothingequation}. One idea is to optimize them as additional parameters in an optimization algorithm, an idea that was proposed by Jameson \cite{Jameson88}. A more in-depth mathematical analysis of the $\epsilon$-values and a strategy to compute them is discussed by Kusch, et.al. in \cite{JKusch16}. All of these approaches are, however, hard to compute in a practical implementation. Therefore, the values are usually determined by a parameter study.

\section{Formulation for Surface and Volume Mesh}
\label{secsurfvsvol}

In the previous Section \ref{sectionfunctionspaces}, we already stated that the Sobolev smoothing method might be formulated on the surface or the volume mesh. Here, we will extend on this point and show some implications of this choice. Recall the smoothing Equation \eqref{newtonstep},
\begin{equation}
\label{completesmoothingequation2}
 D_{p}M(p)^T \Hessapprox D_{p}M(p) \delta p = - \D{p}M(p)^{T} \, \D{m} L(u, \lambda, M(p))^{T}.
 \end{equation}
Depending on the formulation of the Hessian approximation $\Hessapprox$, we might get different formulations. In the case of a volume formulation, assume that the parameterization $M$ consists of two parts. First, the surface parameterization
\begin{equation}
 S: \R^{n_p} \rightarrow \R^{d \cdot n_s} ; p \mapsto s,
\end{equation}
and second, the mesh deformation from surface to volume
\begin{equation}
 V: \R^{d \cdot n_s} \rightarrow \R^{d \cdot n_m} ; s \mapsto m.
\end{equation}
Together, they form the complete design parameterization
\begin{equation}
 M: \R^{n_p} \rightarrow \R^{d \cdot n_m} ; p \mapsto V(S(p)).
\end{equation}
In Equation \ref{eqmainresult}, the Hessian matrix of an individual mesh component w.r.t the parameters $\D{pp} M_{k}$ appears. These component functions $M_{k}$ are combined mappings
\begin{equation}
 M_{k}: \R^{n_p} \rightarrow \R^{d \cdot n_s} \rightarrow \R ; p \mapsto V_{k}(S(p)).
\end{equation}
Therefore, we can apply the Faà di Bruno formula again to get
\begin{equation}
  \D{pp} M_{k}(p) = \D{p}S(p)^{T} \D{ss} V_{k}(s) \, \D{p}S(p) + \sum_{l=1}^{d \cdot n_{s}} \Dp{s_{l}} V_{k}(s) \, \D{pp} S_{l}(p).
 \label{doublefaadibruno}
 \end{equation}
Inserting Equation \eqref{doublefaadibruno} into the original result from Equation \eqref{eqmainresult} yields a formulation for the complete second derivatives,
\begin{equation}
\label{eqfullsmooth}
\begin{aligned}
  \D{pp} \tilde{F}(p) = \ & \D{p}M(p)^{T} \D{mm} L(u, \lambda, m) \, \D{p}M(p) + \\
  & \sum_{k=1}^{d \cdot n_{m}} \Dp{m_{k}} L(u, \lambda, m) \, \D{p}S(p)^{T} \D{ss} V_{k}(s) \, \D{p}S(p) + \\
  & \sum_{k=1}^{d \cdot n_{m}} \Dp{m_{k}} L(u, \lambda, m) \, \left( \sum_{l=1}^{d \cdot n_{s}} \Dp{s_{l}} V_{k}(s) \, \D{pp} S_{l}(p) \right).
\end{aligned}
\end{equation}
In many applications, this formulation can be simplified significantly. Many common surface parameterizations, e.g., the FFD boxes \cite{Sederberg86} or Hicks-Henne functions \cite{Hicks78} used in this paper in Section \ref{secnumeric}, are linear. Therefore, the second order derivative terms of the surface parameterization $\D{pp} S_{l}(p)$ will vanish. This leaves the formulation with some additional cross derivatives for the volume formulation,
\begin{equation}
\sum_{k=1}^{d \cdot n_{m}} \Dp{m_{k}} L(u, \lambda, m) \, \D{p}S(p)^{T} \D{ss} V_{k}(s) \, \D{p}S(p).
\end{equation}
In addition, the mesh deformation between the surface and volume nodes is oftentimes linear as well. For example, the SU2 framework \cite{SU2016} used in this paper computes the deformation with a finite element stiffness approach. See Appendix \hyperref[secAppendix]{A} for more information on the implementation. Numerically, this means assembling a stiffness matrix and solving the corresponding linear system. From this, we can conclude that the second-order derivatives of the mesh deformation $\D{ss} V_{k}(s)$ will vanish as well if an appropriate deformation method is used. \\
In conclusion, we can neglect the second-order derivatives of the mesh parameterization $M(p)$ and formulate a parameterized Sobolev smoothing algorithm based on the first term in Equation \eqref{eqfullsmooth}. For the surface formulation, we end up with
\begin{equation}
  \D{pp} \tilde{F}(p) \approx \D{p}S(p)^{T} (\epsilon_1 \mathrm{I} - \epsilon_2 \triangle_{\Gamma}) \, \D{p}S(p),
\end{equation}
leading to the smoothing equation
\begin{equation}
 D_{p} S(p)^T (\epsilon_1 \mathrm{I} - \epsilon_2 \triangle_{\Gamma}) D_{p} S(p) \delta p = - \D{p}S(p)^{T} \, \D{m} L(u, \lambda, M(p))^{T}.
\end{equation}
Accordingly, we get a similar equation for the volume case,
\begin{equation}
 D_{p} S(p)^T D_{s} V(s)^T  (\epsilon_1 \mathrm{I} - \epsilon_2 \triangle) D_{s} V(s) D_{p} S(p) \delta p = - \D{p}M(p)^{T} \, \D{m} L(u, \lambda, M(p))^{T},
\end{equation}
where $\triangle$ denotes the $d$-dimensional Laplace operator. For more complicated CAD models these linearity assumptions might not hold. In such a case, it would be necessary to compute the additional terms from Equation \eqref{eqfullsmooth} and use them in our approximated Newton step.

\section{Optimization Algorithms}
\label{secoptalg}

\subsection{Reduced SQP Optimizer}
\label{subsectionoptimizer}

In Section \ref{sectionparameter}, we saw how to smooth the gradient for the discrete shape optimization problem from Definition \ref{discreteOptProb}. However, we are still missing one important additional complication. In literally any industrially relevant aerodynamic application we have to deal with additional constraints in the optimization process, other than the mesh and flow equations. In this subsection, we will introduce the handling of additional constraints in the optimization process and demonstrate how our introduced Hessian approximation, by the Sobolev smoothing of gradients, fits naturally into a reduced \textit{sequential quadratic programming} (SQP) framework \cite[Chapter 18] {NumOpt99}. Examples of such constraints could be keeping certain lift or pitching moments for wings, or similar conditions.
\begin{definition}[constrained shape optimization problem]
\label{complOptProb}
Use the terms from Definition \ref{discreteOptProb} and let $E: \mathbb{R}^{n_u} \times \mathbb{R}^{d \cdot n_{m}} \rightarrow \mathbb{R}^{n_E}$ be a set of additional constraints, then we call
\begin{equation}
\begin{aligned}
 \min_{p \in S_p} \ & F(u,m) & \text{(objective function)} \\
 s.t. \ & M(p) = m & \text{(mesh equation)} \\
 & H(u,m) = 0 & \text{(flow equations)} \\
 & E(u,m) = 0 & \text{(equality constraint)}
\end{aligned}
\end{equation}
the constrained shape optimization problem.
\end{definition}
With this we get an extended Lagrangian function $L^{\text{Ext}} : \R^{n_u} \times \R^{n_{\lambda}} \times \R^{n_{\nu}} \times \R^{d \cdot n_m} \rightarrow \R $,
\begin{equation}
\label{Lagrangeextended}
  (u, \lambda, \nu, m) \mapsto F(u,m) + \nu^{T} E(u,m) + \lambda^{T} ( G(u,m)-u ).
\end{equation}
Here, the multiplier $\nu$ weights how much we value improvement in the objective against holding the constraints, and we can derive corresponding KKT conditions.
\begin{equation}
\begin{aligned}
 & \D{\lambda} L^{\text{Ext}}(u, \lambda, \nu, m) = 0 \Leftrightarrow u =  G(u,m) \\
 & \D{u} L^{\text{Ext}}(u, \lambda, \nu, m) = 0  \Leftrightarrow  \lambda =  \D{u} G(u,m)^T \lambda + \D{u} F(u,m)^{T} + \D{u} E(u,m)^{T} \nu \\
 & \D{\nu} L^{\text{Ext}}(u, \lambda, \nu, m) = 0 \Leftrightarrow 0 =  E(u,m) \\
 & \D{m} L^{\text{Ext}}(u, \lambda, \nu, M(p)) \D{p} M(p) = 0 \Leftrightarrow \\
 & \hspace{1cm} 0 =  \D{p} M(p)^{T} \left( \D{m} G(u,M(p))^T \lambda + \D{m} F(u,M(p))^{T} + \D{m} E(u,M(p))^{T} \nu \right)
\end{aligned}
\end{equation}
We want to solve these KKT conditions and a natural way to solve this system of nonlinear equations is to apply a Newton step \cite{phdSchmidt}.
\begin{align}
\label{KKTConstrNewtonStep}
\left[\begin{array}{cccc}
  \D{uu} L^{\text{Ext}} & \D{up} L^{\text{Ext}} & (\D{u} H)^T & (\D{u} E)^T \\
  \D{pu} L^{\text{Ext}} & \D{pp} L^{\text{Ext}} & (\D{p} H)^T & (\D{p} E)^T \\
  \D{u} H & \D{p} H & 0 & 0 \\
  \D{u} E & \D{p} E & 0 & 0 \\
\end{array}\right]
\begin{bmatrix}
  \Delta u \\
  \Delta p \\
  \Delta \lambda \\
  \Delta \nu \\
\end{bmatrix} 
 & = \begin{bmatrix}
 - (\D{u} L^{\text{Ext}})^T  \\
 - (\D{p} L^{\text{Ext}})^T  \\
 -H(u,p) \\
 -E(u,p)
\end{bmatrix}
\end{align}
In this equation, we have the Hessian of the Lagrangian function as the upper left part of the matrix. Also, we still have a dependence on $u$ and $p$. Solving the adjoint problem and applying Lemma \ref{lemmaadjcalc} we can remove the dependency on $u$ and end up with reduced derivatives w.r.t. $p$
\begin{equation}
\label{eqreducedderiv}
 \tilde{D}_{p} F :=  \left( \D{m} F - \D{u} F (\D{u} G - \Id)^{-1} \D{m} G \right) \D{p} M(p),
\end{equation}
and simplify Equation \eqref{KKTConstrNewtonStep} to
\begin{align}
\label{reducedSQPupdate}
\left[\begin{array}{cc}
  B & \tilde{D}_p E^{T} \\
  \tilde{D}_p E & 0 \\
\end{array}\right]
\begin{bmatrix}
  \Delta p \\
  \Delta \nu \\
\end{bmatrix}
=
\begin{bmatrix}
  -\tilde{D} L^{T} \\
  - E \\
\end{bmatrix}.
\end{align}
Here we make use of the Hessian approximation 
\begin{align}
\left[\begin{array}{cc}
  \D{uu} L^{\text{Ext}} & \D{up} L^{\text{Ext}} \\
  \D{pu} L^{\text{Ext}} & \D{pp} L^{\text{Ext}} \\
\end{array}\right]
 & \approx
\left[\begin{array}{cc}
  0 & 0 \\
  0 & B \\
\end{array}\right]
\end{align}
with $B$ defined from Equation \eqref{completesmoothingequation}. \\
The relation to classical SQP methods becomes apparent if we take a look at the following quadratic optimization problem.
\begin{equation}
\begin{aligned}
 \min_{v} & \ \frac{1}{2} v^{T} \tilde{D}_{pp} L^{\text{Ext}} v + \tilde{D}_{p} F v & \text{(objective function)} \\
 s.t. \ & \ \tilde{D}_{p} E v + E = 0 & \text{(equality constraint)}
\end{aligned}
\end{equation}
This quadratic approximation of the Lagrangian combined with linear constraints is known as the basic idea of SQP methods. The quadratic optimization problem has a unique solution $(v, \mu)$, fulfilling the relation
\begin{equation}
\begin{aligned}
 \tilde{D}_{pp} L^{\text{Ext}} v + \tilde{D}_{p} F^{T} + \tilde{D}_{p} E^{T} \mu  &= 0 \\
 \tilde{D}_{p} E v + E & = 0.
\end{aligned}
\end{equation}

\begin{algorithm}[ht]
 \caption{SQP method for equality constraint optimization}
 \label{eqconstSQP}
 \begin{algorithmic}[0]
  \State {\textbf{input} Initial design variables $p_0$, set iteration counter $i=0$}
  \While {err $\geq$ tol}
    \State compute a deformed mesh $ m_i = M(p_i)$
    \State solve the flow equation $u_i=G(u_i,m_i)$
    \State solve the adjoint equations
     \bindent
      \State $ \lambda_i = \D{u} G(u_i,m_i)^T \lambda_i + \D{u} F(u_i,m_i)^T $
      \For {$k=0,1,...,n_{E}$}
       \State $ \lambda_i^{E_{k}} = \D{u} G(u_i,m_i)^T \lambda_i^{E_{k}} + \D{u} E_{k}(u_i,m_i)^T $
      \EndFor
     \eindent
    \State evaluate the design equations and compute reduced gradients
     \bindent
      \State $ \Dred{p} F = \left( \lambda_{i}^{T} \D{m} G(u_{i},x_{i}) + \D{m} F(u_{i},x_{i}) \right) \D{p} M(p)$
      \For {$k=0,1,...,n_{E}$}
       \State $ \Dred{p} E_{k} = \left( (\lambda_{i}^{E_{k}})^{T} \D{m} G(u_{i},m_{i}) + \D{m} E_{k}(u_{i},x_{i}) \right) \D{p} M(p)$
      \EndFor
     \eindent
    \State solve
  	\begin{align*}
\left[\begin{array}{cc}
  B_{i} & \tilde{D}_{p} E^{T} \\
  \tilde{D}_{p} E & 0 \\
\end{array}\right]
\begin{bmatrix}
  v \\
  \nu_{i+1} \\
\end{bmatrix}
=
\begin{bmatrix}
  -\tilde{D}_{p} F^{T} \\
  -E \\
\end{bmatrix}
\end{align*}
   \State $p_{i+1} = p_i + v$
   \State $i=i+1$
  \EndWhile
  \State {\textbf{return} $p_i$ }
 \end{algorithmic}
\end{algorithm}

If we subtract $\tilde{D}_{p} E \mu$ from both sides we have
\begin{align}
\label{reducedSQPupdate2}
\left[\begin{array}{cc}
  \tilde{D}_{pp} L^{\text{Ext}} & \tilde{D}_p E^{T} \\
  \tilde{D}_p E & 0 \\
\end{array}\right]
\begin{bmatrix}
  v \\
  \mu \\
\end{bmatrix}
=
\begin{bmatrix}
  -\tilde{D} F^{T} \\
  - E \\
\end{bmatrix}.
\end{align}
Since we use the approximation $ \tilde{D}_{pp} L^{\text{Ext}} \approx B$ and since the involved matrices are invertible we can identify the solutions of equations \ref{reducedSQPupdate} and \ref{reducedSQPupdate2}. This gives us
\begin{equation}
\begin{aligned}
 \Delta p & = v \\
 \nu + \Delta \nu & = \mu.
\end{aligned}
\end{equation}
At last, all we are missing is a way to calculate $\tilde{D}_{p} E(u,m)$. This gradient can be calculated by the adjoint solver implementation, similar to the gradient of the objective function. Therefore, we need additional calls to the adjoint solver for each constraint and adjoint constraint states $\lambda^{E_{k}}$.

\begin{algorithm}[ht]
 \caption{SQP method for mixed constraint optimization}
 \label{constSQP}
 \begin{algorithmic}[0]
  \State {\textbf{input} Initial design variables $p_0$, set iteration counter $i=0$}
  \While {err $\geq$ tol}
    \State compute a deformed mesh $ m_i = M(p_i)$
    \State solve the flow equation $u_i=G(u_i,m_i)$
    \State solve the adjoint equations
     \bindent
      \State $ \lambda_i = \D{u} G(u_i,m_i)^T \lambda_i + \D{u} F(u_i,m_i)^T $
      \For {$k=0,1,...,n_{E}$}
       \State $ \lambda_i^{E_{k}} = \D{u} G(u_i,m_i)^T \lambda_i^{E_{k}} + \D{u} E_{k}(u_i,m_i)^T $
      \EndFor
      \For {$l=0,1,...,n_{C}$}
       \State $ \lambda_i^{C_{l}} = \D{u} G(u_i,m_i)^T \lambda_i^{C_{l}} + \D{u} C_{l}(u_i,m_i)^T $
      \EndFor
     \eindent
    \State evaluate the design equations and compute reduced gradients
     \bindent
      \State $ \Dred{p} F = \left( \lambda_{i}^{T} \D{m} G(u_{i},x_{i}) + \D{m} F(u_{i},x_{i}) \right) \D{p} M(p)$
      \For {$k=0,1,...,n_{E}$}
       \State $ \Dred{p} E_{k} = \left( (\lambda_{i}^{E_{k}})^{T} \D{m} G(u_{i},m_{i}) + \D{m} E_{k}(u_{i},x_{i}) \right) \D{p} M(p)$
      \EndFor
      \For {$l=0,1,...,n_{C}$}
       \State $ \Dred{p} C_{l} = \left( (\lambda_{i}^{C_{l}})^{T} \D{m} G(u_{i},m_{i}) + \D{m} C_{l}(u_{i},x_{i}) \right) \D{p} M(p)$
      \EndFor
     \eindent
    \State compute a solution $v$ of the quadratic problem
  	       \begin{align*}
            \min_{v} \ \ & \frac{1}{2} v^{T} B v + \tilde{D}_{p} F v  \\
            s.t. \ \ & \tilde{D}_{p} E v + E = 0 \\
            & \tilde{D}_{p} C v + C \geq 0 
           \end{align*}
    \State update the design $p_{i+1} = p_{i}+v$
    \State $i=i+1$
  \EndWhile
  \State {\textbf{return} $p_i$ }
 \end{algorithmic}
\end{algorithm}

To finally summarize our method, we have introduced a parameterized, discrete shape Hessian approximation, associated it with the Sobolev smoothing of free node gradients, shown how the Laplace-Beltrami operator can be used as a preconditioner, and demonstrated that this is the same as using a reduced SQP method with an approximated Hessian. With all those parts in place, Algorithm \ref{eqconstSQP} shows the overall optimization algorithm. \\
As the final step in this Section, we will state what an expansion of the algorithm for inequality constraints looks like. It is entirely possible to incorporate additional inequality constraints as well. We will not give the full analysis here, since it is beyond the scope of this paper. However, since we have an SQP method, we can just use the available adaptation from the literature \cite[Chapter 18]{NumOpt99}. Assume that we have an additional set of constraints $C: \mathbb{R}^{n_u} \times \mathbb{R}^{d \cdot n_m} \rightarrow \mathbb{R}^{n_{C}}$ in Definition \ref{complOptProb}, such that
\begin{equation}
 C(u,m) \geq 0 \quad \text{(inequality constraint),}
\end{equation}
then the resulting optimization method is shown in Algorithm \ref{constSQP}.

\subsection{Hybrid Laplace-Beltrami operator}

Having formulated the optimization algorithm, we have to reconsider the choice of the approximation for $\tilde{D}_{pp} L^{\text{Ext}}(u, \lambda, \nu, M(p)) $. Here, we will discuss two different aspects, not covered until now, to formulate our final expression for $B$. \\
First, when dealing with additional constraints we saw how to extend the Lagrangian and the KKT conditions accordingly
\begin{equation}
 L^{ext}(u,\lambda,\nu,M(p)) =  L(u,\lambda,M(p)) + \nu^{T} E(u,M(p)).
\end{equation}
We propose using the parameterized formulation of Sobolev gradient reinterpretation, as described by Equation \eqref{completesmoothingequation}. Mathematically this means that Equation \eqref{systemmatrixequation} is an approximation of the Hessian of the original Lagrangian without additional constraints.
\begin{equation}
 \tilde{D}_{pp} L^{ext}(u,\lambda,\nu,M(p)) = \tilde{D}_{pp} L(u,\lambda,M(p)) + \nu^{T} \tilde{D}_{pp} E(u,M(p)) \approx \bar{B} + \nu^{T} \tilde{D}_{pp} E(u,M(p))
\end{equation}
Neglecting the Hessian of the constraints will negatively affect the accuracy of our approximation. Furthermore, this is the only dependence of the optimizer on the nonlinearity of the constraint. Numerical experiments have shown that large design updates with this formulation have trouble abiding by the constraints, since the linearization in Equation \eqref{reducedSQPupdate} is no longer valid. Due to this, we propose the introduction of a regularization term
\begin{equation}
 \tilde{D}_{pp} L^{ext}(u,\lambda,\nu,M(p)) \approx \bar{B} + c \Id.
\end{equation}
The constant factor is ideally chosen as $c = \Vert \nu^{T} \tilde{D}_{pp} E(u,M(p)) \Vert$. However, since computing this terms is as computationally expensive as computing the entire Hessian matrix, we need a simpler heuristic to determine $c$ in practice. \\
Second, we have to ask ourselves about the structure and properties of the Laplace-Beltrami operator
\begin{equation}
 \D{p}M(p)^{T} ( \epsilon_1 \mathrm{I} - \epsilon_2 \triangle ) \, \D{p}M(p).
\end{equation}
It can be split into two parts
\begin{enumerate}
 \item The identity $\mathrm{I}$ is the canonical term to regularize the expression and stands for the standard $\mathrm{L}^{2}$ scalar product when thinking about gradient representation.
 \item The Laplace operator $\triangle$ is how higher-order derivative information is introduced into Sobolev smoothing. It has strong smoothing properties onto the sensitivities. In the derivation of the gradient reinterpretation, we showed its connection to the $\mathrm{H}^{1}$ scalar product.
\end{enumerate}
To formulate the Laplace operator we need a spatial structure, i.e., an embedding in $\R^{2}$ or $\R^{3}$. The design parameter space does not have this property, so it must be assembled on the computational mesh and then projected. In contrast to this, the identity can be defined as the componentwise identity for each of the design parameters. Combining both viewpoints we construct the system matrix as follows
\begin{equation}
 \label{hybridlaplacebeltrami}
  B = \D{p}M(p)^{T} ( \epsilon_1 \mathrm{I}_{\Gamma} - \epsilon_2 \triangle_{\Gamma} ) \, \D{p}M(p) + \epsilon_3 \mathrm{I}_{p}.
\end{equation}
This expression is referred to as the \textit{hybrid Laplace-Beltrami operator}. \\
It is motivated by the two considerations above and is a fast and computationally cheap way to regularize our Hessian approximation. We already stated in Section \ref{sectionfunctionspaces} that optimal weights $\epsilon_{1}, \epsilon_{2}$ could be computed analytically. However, the necessary terms are oftentimes too expensive to evaluate in practice. For practical applications of the algorithm, we found that a simple heuristic that ensures that the hybrid Laplace-Beltrami operator \eqref{hybridlaplacebeltrami} remains a symmetric positive definite matrix works well enough.

\FloatBarrier

\subsection{One Shot}
\label{subseconeshot}

The One Shot method is a framework for the simultaneous solution of flow, adjoint, and design equations. Mathematically, it is a Newton-type method to solve the complete nonlinear system of equations in the KKT condition at once \cite{Hamdi11}. We can understand this better when we view it as a natural extension of the adjoint framework. First, start with a coupled fixed point iteration for the flow and adjoint equations, known as the piggyback iteration
\begin{equation}
 \label{eqpiggyback}
\begin{bmatrix}
  u_{i+1} \\
  \lambda_{i+1}
\end{bmatrix}
=
\begin{bmatrix}
 G(u_{i},m) \\
 \D{u} G(u_{i},m)^T \lambda_{i} + \D{u} F(u_{i},m)^T
\end{bmatrix}.
\end{equation}
The following notation can be significantly simplified by introducing the so-called shifted Lagrangian function
\begin{equation}
 N(u,\lambda,x) = L(u,\lambda,x) + \lambda^{T} u =  F(u,x) + \lambda^{T} G(u,x).
 \label{eqshiftedlagrange}
\end{equation}
For the classical One Shot algorithm, the intermediate adjoint solution is directly used to update the mesh coordinates.
\begin{align}
\label{eqoneshotiter1}
\begin{bmatrix}
  u_{i+1} \\
  \lambda_{i+1} \\
  m_{i+1}
\end{bmatrix}
=
\begin{bmatrix}
  G(u_{i},m_{i}) \\
  \D{u} N(u_{i}, \lambda_{i}, m_{i})^T \\
  m_{i} - P_{i} \D{m} N(u_{i}, \lambda_{i}, m_{i})^T
\end{bmatrix}
\end{align}
Such an iteration solves all three KKT conditions at once, hence the name One Shot method. Naturally, the convergence of this coupled iteration is strongly dependent on an appropriate choice of the preconditioners $P_{i}$. Extensive convergence analysis for this formulation has been done in the past. We refer the reader to the works of Hamdi and Griewank \cite{Hamdi11}, where the method is treated as a Quasi-Newton method for the KKT system.

\begin{algorithm}[ht]
\caption{Multistep One Shot with mesh update}
 \begin{algorithmic}[0]
  \State {\textbf{input} Initial values $p_{0}$}
  \For {$i=0,1,\dots,I-1$}
   \State $ m_{i} = M(p_{i}) $
   \For {$j=0,1,\dots,J-1$}
	\State $ u_{j+1} = G(u_{j},m_{i}) $
    \State $\lambda_{j+1} = \D{u} N(u_{j}, \lambda_{j}, m_{i})^T $
   \EndFor
   \State $ \delta p = \D{p} M(p_{i})^{T} \D{m} N(u_{J}, \lambda_{J}, m_{i})^T $
   \State $ p_{i+1} = p_{i} - B_{i} \delta p $
  \EndFor
  \State {\textbf{return} $p_{I}$ }
 \end{algorithmic}
\label{algmultisteponeshot}
\end{algorithm}

\begin{algorithm}[ht]
\caption{constrained multistep One Shot}
 \begin{algorithmic}[0]
  \State {\textbf{input} Initial values $p_{0}$}
  \For {$i=0,1,...,I-1$}
   \State $ m_{i} = M(p_{i}) $
   \For {$j=0,1,...,J-1$}
    \State $ u_{j+1} = G(u_{j}, m_{i}) $
    \State $\lambda_{j+1} = \D{u} N(u_{j}, \lambda_{j}, m_{i})^T$
    \For {$k=0,1,...,n_{E}$}
     \State $\lambda_{j+1}^{E_{k}} = \D{u} N^{E_{k}}(u_{j}, \lambda_{j}^{E_{k}}, m_{i})^T$
    \EndFor
    \For {$l=0,1,...,n_{C}$}
     \State $\lambda_{j+1}^{C_{l}} = \D{u} N^{C_{l}}(u_{j}, \lambda_{j}^{C_{l}}, m_{i})^T$
    \EndFor
   \EndFor
   \State $ \Dred{p} F = \D{m} N(u_{J}, \lambda_{J}, m_{i}) \D{p} M(p_{i})$
   \For {$k=0,1,...,n_{E}$}
    \State $ \Dred{p} E_{k} = \D{m} N^{E_{k}}(u_{J}, \lambda_{J}^{E_{k}}, m_{i}) \D{p} M(p_{i})$
   \EndFor
   \For {$l=0,1,...,n_{C}$}
    \State $ \Dred{p} C_{l} = \D{m} N^{C_{l}}(u_{J}, \lambda_{J}^{C_{l}}, m_{i}) \D{p} M(p_{i})$
   \EndFor
   \State compute preconditioner $ B_{i} $
   \State compute a solution $v$ of the quadratic problem
  	      \begin{align*}
           \min_{v} \ \ & \frac{1}{2} v^{T} B v + \Dred{p} F v  \\
           s.t. \ \ & \Dred{p} E v + E = 0 \\
           & \Dred{p} C v + C \geq 0
          \end{align*}
   \State $ p_{i+1} = p_{i} + v $
  \EndFor
  \State {\textbf{return} $p_{I}$ }
 \end{algorithmic}
 \label{algoneshotdircon}
\end{algorithm}

For practical applications, Equation \eqref{eqoneshotiter1} makes for a poor algorithm, due to a couple of reasons.
\begin{itemize}
 \item Updating the design only after several iterations would intuitively increase the accuracy of the involved functions and gradients. Such algorithms are known as multistep One Shot algorithms.
\end{itemize}
\begin{itemize}
 \item For our purposes the mesh parameterization has to be incorporated in the update procedure as well. In industrial applications updating the geometry can be quite expensive in terms of runtime. The implementation has to project mesh sensitivities back onto the design parameters and additionally call CAD tools, mesh generators, or similar necessary software to compute an updated geometry.
\end{itemize}
Taking the arguments above into consideration the parameterized multistep One Shot algorithm \ref{algmultisteponeshot} can be formulated. However, this method is still lacking a crucial component. For aerodynamic shape optimization, we need a way to incorporate additional constraints into the optimization, as introduced in Subsection \ref{subsectionoptimizer}. There we took the classical approach of computing converged flow and adjoint solutions and then performing a Quasi-Newton step for the design equation, and replaced it with a reduced SQP optimizer. We can combine this with the One Shot algorithm \ref{algmultisteponeshot} by handing intermediate flow and adjoint states to the quadratic subproblem in the SQP algorithm. Combining Algorithms \ref{constSQP} and \ref{algmultisteponeshot} into one method leads to the One Shot optimization algorithm \ref{algoneshotdircon} used in this paper.

\section{Numerical Results}
\label{secnumeric}

\subsection{NACA 0012}
\label{subsecnaca0012}

\begin{figure}[ht]
 \center
 \includegraphics[width=0.5\linewidth]{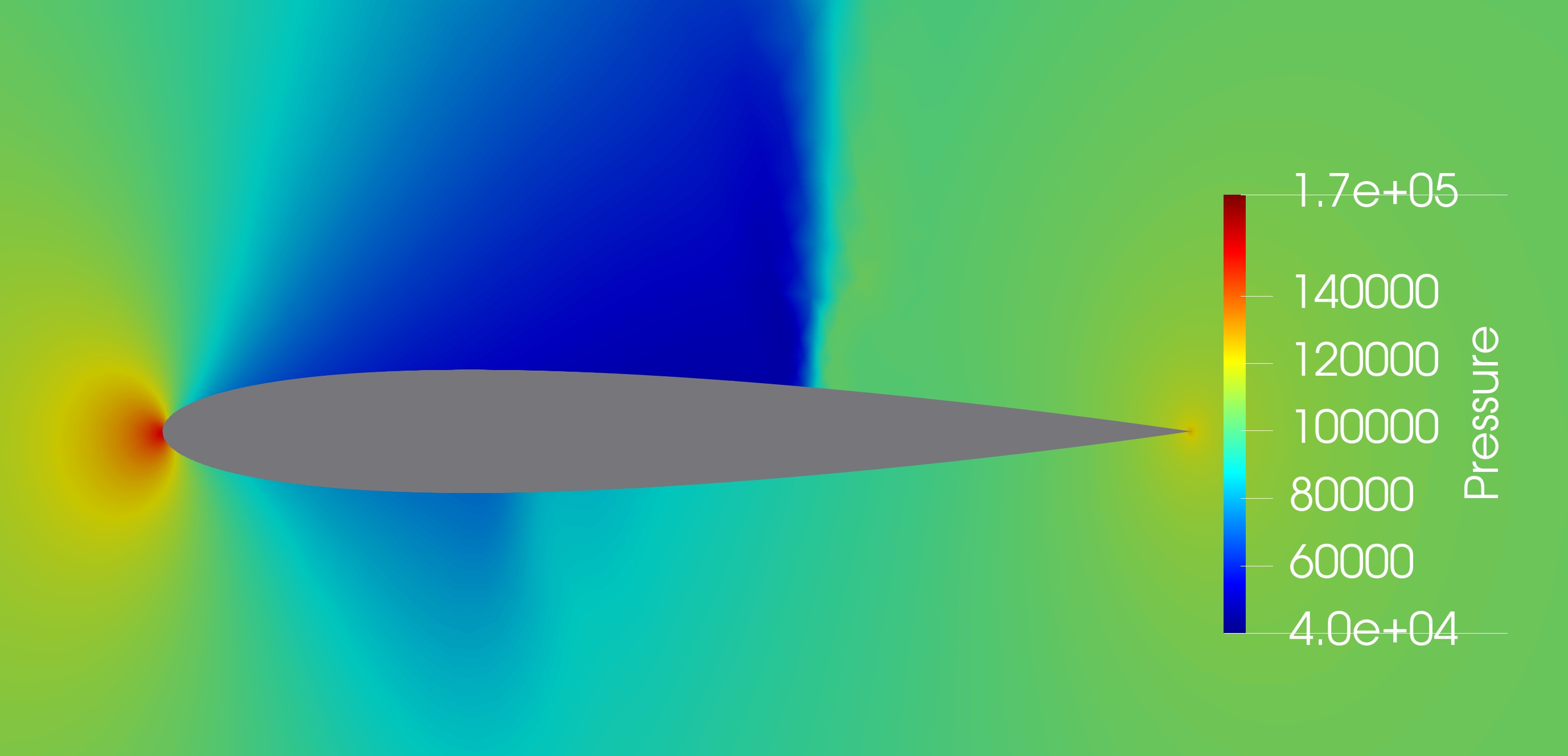}
 \caption{Pressure distribution in the flow field around the NACA 0012 airfoil}
 \label{picNACA0012flow}
\end{figure}

In this section, we will show results for our method applied to standard test cases from aerodynamic shape optimization.
The first test case is the well-known NACA 0012 airfoil. This geometry dates back to the first half of the 20th century \cite{NACA_0012} and is one of the most widespread test cases for CFD in general. For the flow simulation, we use Euler equations, where the conditions are a Mach number of $M=0.8$, an angle of attack $\AoA=1.25^{\circ}$, and standard air, resulting in a typical shock appearing on the upper side of the airfoil. The computational mesh contains $5233$ points, $10216$ elements, and two boundary markers, one for the airfoil and farfield boundary respectively. The resulting flow field can be seen in Figure \ref{picNACA0012flow}. In fact, the drag value in this Euler simulation results from the shock. Therefore, assuming one wants to minimize the drag coefficient it is intuitively clear that the shape optimization must remove the shock. \\
As a first test, we validate the finite elements solver for the Laplace-Beltrami operator by applying it to the surface sensitivities for the drag coefficient $c_{D}$, according to Equation~\eqref{sobolevPDE}. In Figure \ref{picNACA0012surfsens}, the upper part shows the $\mathrm{L}^{2}$ gradient $\nabla^{\mathrm{L}} c_{D}$, while the lower part shows the reinterpreted $\mathrm{H}^{1}$ Sobolev gradient $\nabla^{\mathrm{H}} c_{D}$ on the surface. Both represent the directional derivative for their respective scalar products. As expected, Sobolev reinterpretation shows the classic properties of a Laplace-Beltrami operator, e.g., high sensitivities around the trailing edge are smoothed out.

\begin{figure}[ht]
\center
  \includegraphics[width=0.75\linewidth]{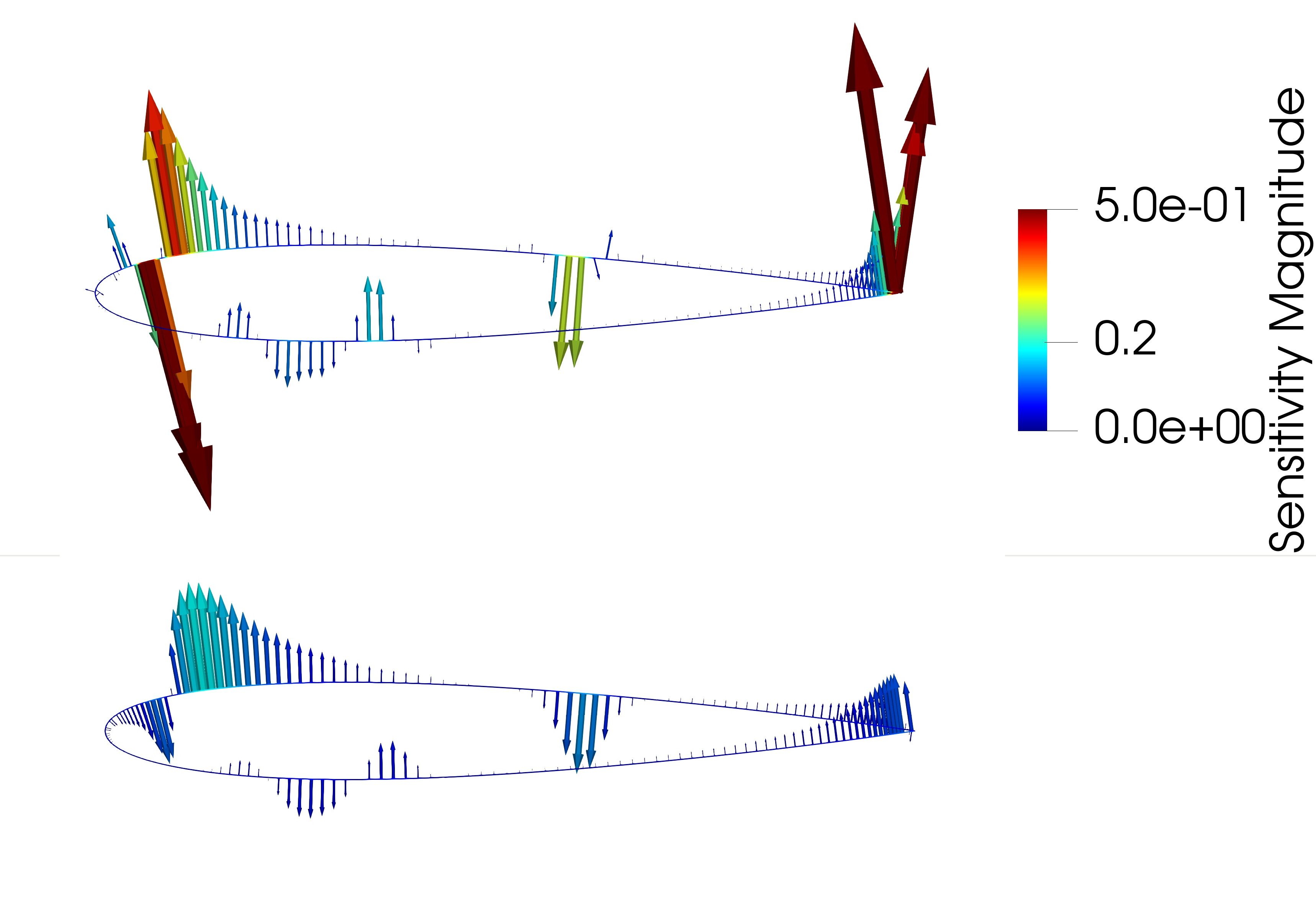}
  \caption{Effect of smoothing on the normal sensitivities on the surface mesh, showing the unaltered $\mathrm{L}^{2}$ gradient (upper) and reinterpreted $\mathrm{H}^{1}$ gradient (lower)}
  \label{picNACA0012surfsens}
\end{figure}

Of course, the main idea of this paper is to apply the Sobolev smoothing method to the design parameters. For this purpose, the airfoil is parameterized by 38 Hicks-Henne bump functions. 19 to deform the upper and 19 to deform the lower curve. The peaks of these functions are distributed equally along the chord at relative lengths of $0.05, 0.10, \dots, 0.95$. \\
For the optimization, we minimize the drag coefficient $c_{D}$, in drag counts ($1$ count $=10^{-4}$), such that the lift coefficient of $32.69$ lift counts ($1$ count $=10^{-2}$) is kept constant. The results of the optimization can be seen in Figure \ref{picOptNACA0012}. We compare the optimization algorithm \ref{eqconstSQP} with our approximated Hessian matrix to a simple constrained gradient descent method, with projection to the constraints. Also, to better understand how the choice of $\epsilon_{1}$ and $\epsilon_{2}$ effects the optimization, we test two different settings. In \cite{JKusch16}, it was suggested to choose a ratio $\epsilon_{1}= 10 \cdot \epsilon_{2}$ between the two parameters. This is test case dependent and a parameter study for the NACA 0012 case resulted in the choice of $\epsilon_{1}= 1.0, \epsilon_{2}=0.0625$. To demonstrate this is a good choice, we show a second setting using $\epsilon_{1}= 1.0, \epsilon_{2}=0.625$. With these settings, the Laplace-Beltrami operator is assembled on the design surface according to Equation \eqref{systemmatrixequation}.

\begin{figure}[ht]
\center
  \includegraphics[width=0.85\linewidth]{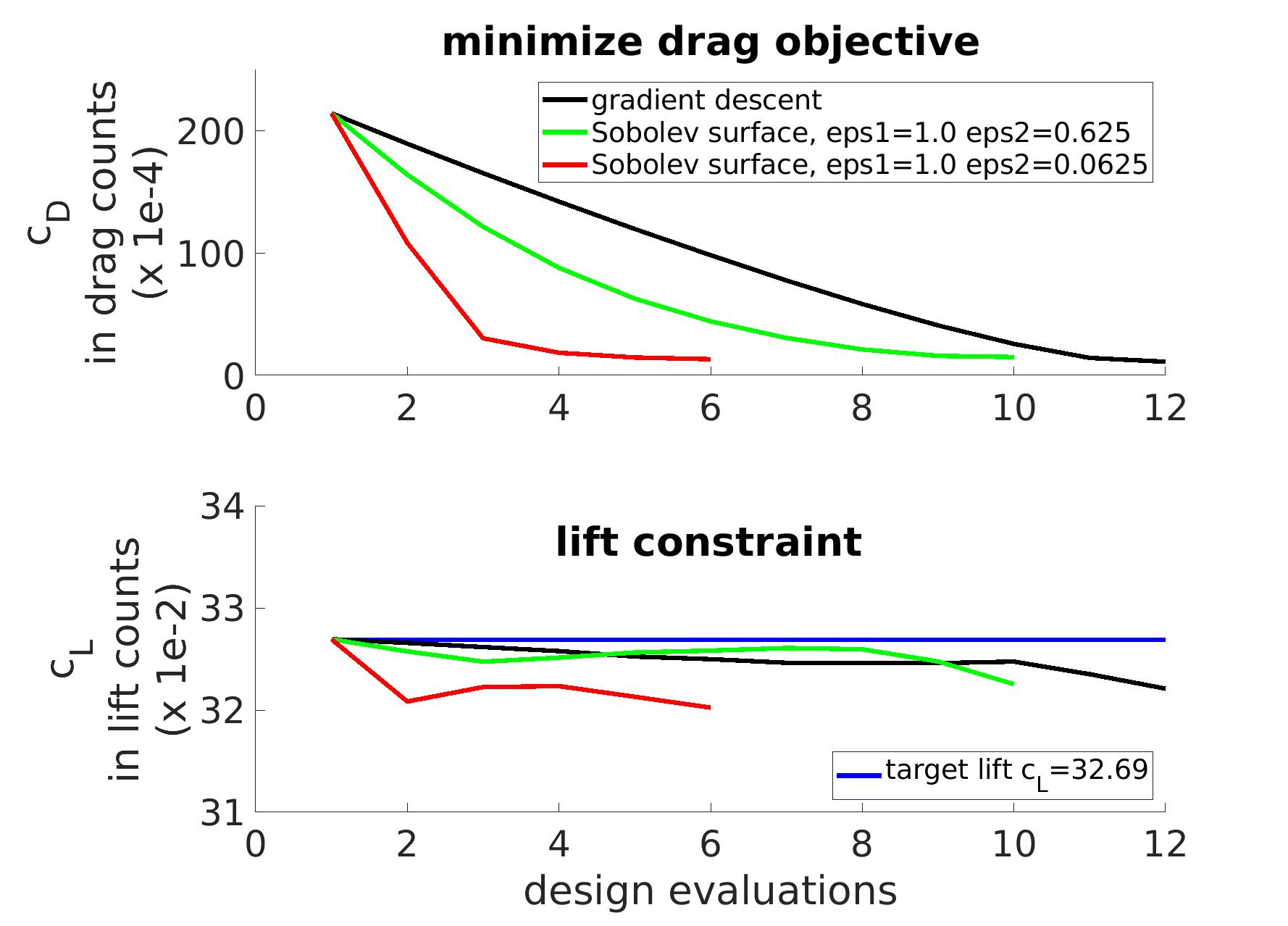}
  \caption{Comparison of the shape optimization for the NACA 0012 test case}
  \label{picOptNACA0012}
\end{figure}


All methods achieve a similar level of improvement in the objective function, with the Sobolev reinterpretation of the gradients yielding a considerably faster convergence than gradient descent. For all methods, the final $c_{D}$ values for the optimized shapes are about $13$ drag counts, which is within the range of spurious drag. The deviation in the lift constraint is more pronounced for the Sobolev methods. However, it stays within $0.6$ lift counts of the target lift, which is within the numerical accuracy. Comparing the two Sobolev variants to each other, the first set of parameters is noticeably faster in the decrease of the objective than the second, at the cost of less than one lift count.

Due to the inviscid nature of this test case, optimal solutions have to be shock free. The computed drag values in those shock free situation stem from numerical stabilization. To this end, Figure \ref{picNACA0012flowcomp} shows the shape and pressure field for the starting geometry (left), the result of our Sobolev smoothed SQP optimization with $\epsilon_{1}= 1.0, \epsilon_{2}=0.0625$ (center), and gradient descent (right). Each optimization algorithm efficiently removes the shock on the upper side of the airfoil and thereby the shock-induced drag. Upon closer inspection, one could argue that a negligible shock remains for the gradient descent algorithm. This is in correspondence to the spurious nature of the drag as discussed above.

\begin{figure}[H]
\center
  \includegraphics[width=\linewidth]{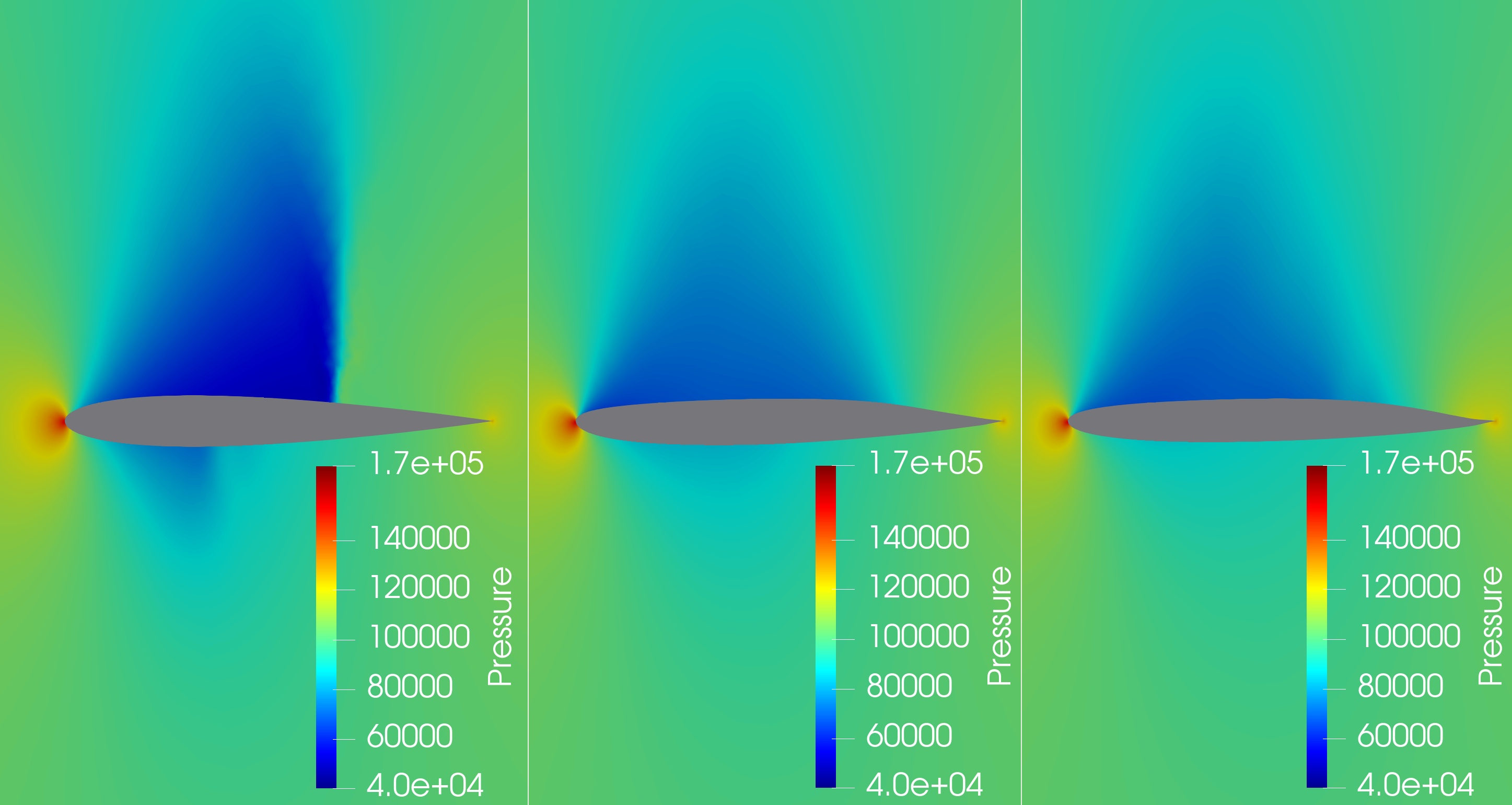}
  \caption{Flow field comparison for original and optimized NACA 0012 airfoil. Original design (left), results using Sobolev smoothing with $\epsilon_{1}= 1.0, \epsilon_{2}=0.0625$ (center) and gradient descent (right)}
  \label{picNACA0012flowcomp}
\end{figure}

\FloatBarrier

\subsection{ONERA M6}

The NACA 0012 test in Section \ref{subsecnaca0012} gave a proof of concept for our algorithm on a well-understood reference test case. However, we are interested in applying preconditioning to more involved optimization problems. To demonstrate the potential for more complex geometries, and especially other parameterizations with higher numbers of design parameters, we take a look at the ONERA M6 test case. This is one of the most common examples of a 3D transonic wing in the CFD community, see AGARD 1979 \cite[Appendix B1]{Agard79}. The standard flow conditions for this case are an angle of attack of $\AoA=3.06^{\circ}$, a Mach number of $M=0.8395$, and a Reynolds number of $\Rey=11.72 \times 10^{6}$, where the Reynolds number is computed from a reference mean chord length $l_C=0.64607m$ and a wing area $A_{ref}=0.7532m^{2}$. Under these conditions, the solution features a transonic flow with a double shock on the upper surface, called a lambda shock. The characteristic pressure distribution on the upper side of the wing is depicted in Figure \ref{picONERAM6origCp}. \\
Wind tunnel data for the pressure coefficients on seven spanwise cross-sections are stated in the reference database \cite{Agard79}. Their positions in percentages of the total length are listed in Table \ref{tablewindtunnelpos} below. For each position 34 experimental measurements at different locations, along the cross-section, are given.
\begin{table}[ht]
\centering
 \begin{tabular}{|c c c c c c c c|}
 \hline
 section & 1 & 2 & 3 & 4 & 5 & 6 & 7 \\
 \hline
 position $\frac{y}{b}$ & 0.20 & 0.44 & 0.65 & 0.80 & 0.90 & 0.96 & 0.99 \\
 \hline
 \end{tabular}
 \caption{Cross-section positions for wind tunnel measurements.}
 \label{tablewindtunnelpos}
\end{table}

\begin{figure}[ht]
\begin{minipage}{0.5\textwidth}

 \center
 \includegraphics[width=\linewidth]{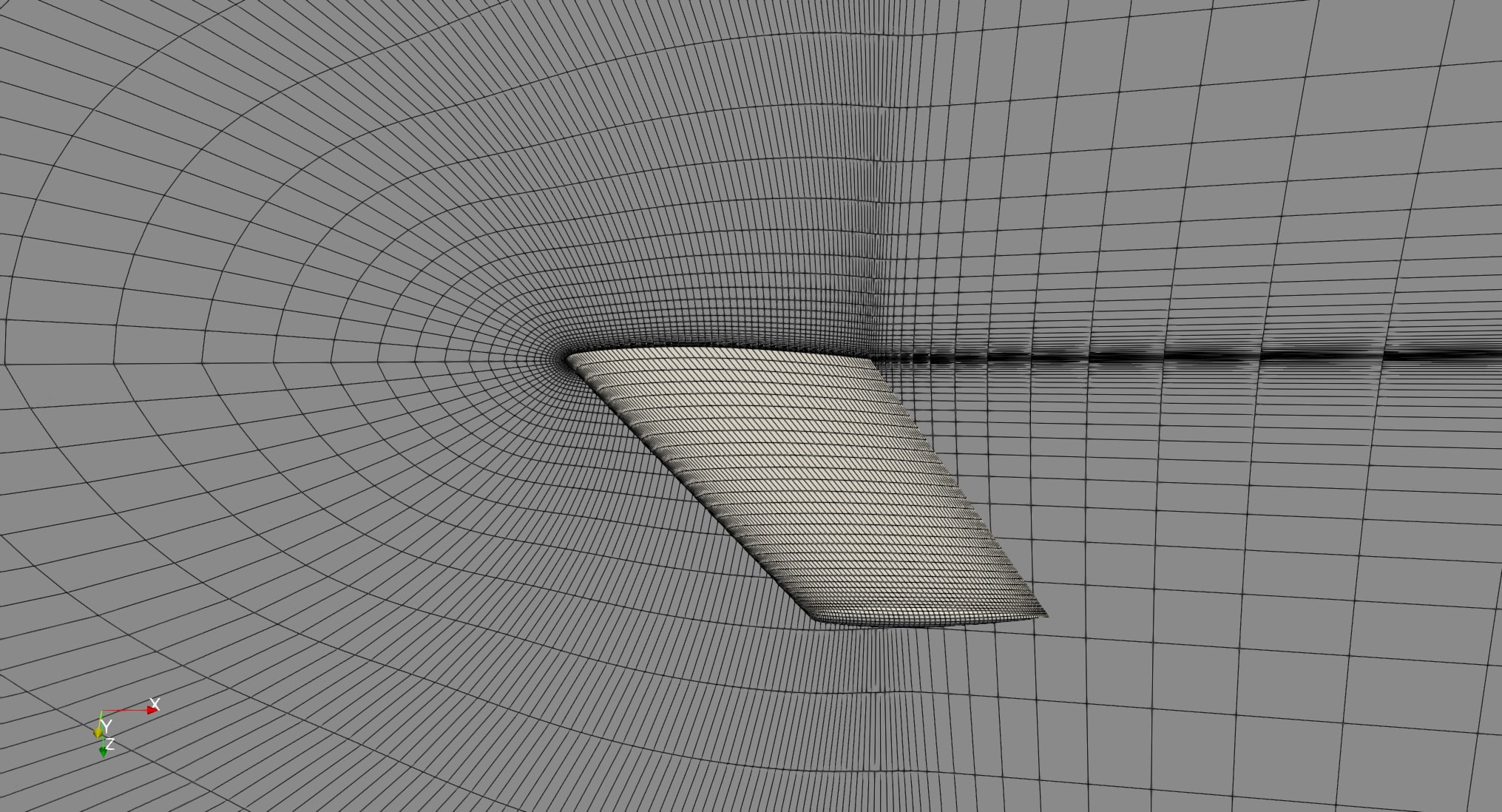}
 \caption{The computational mesh for the ONERA M6 geometry}
 \label{picONERAM6mesh}

 \center
 \includegraphics[width=\linewidth]{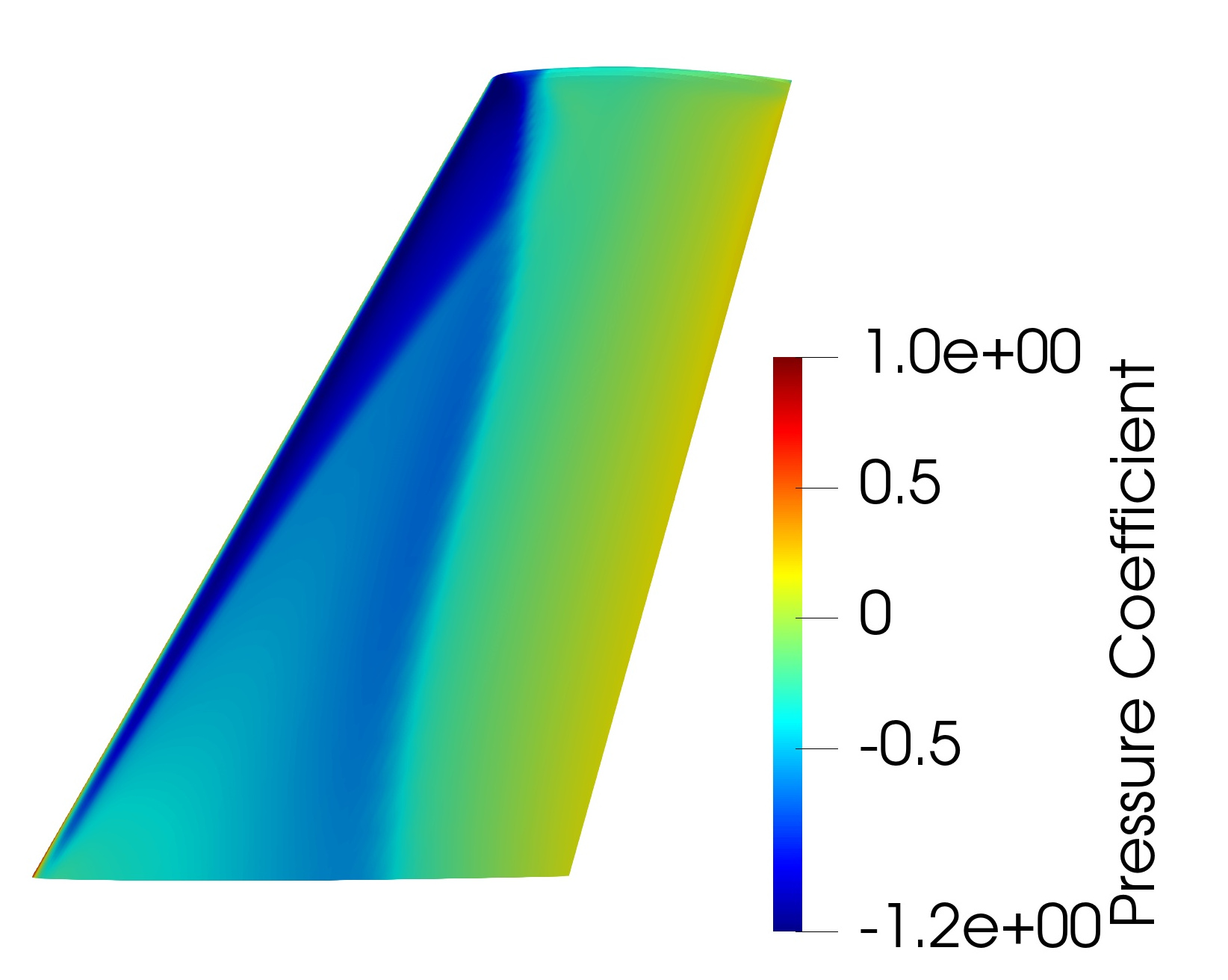}
 \caption{Pressure coefficient for the original ONERA M6 wing}
 \label{picONERAM6origCp}

\end{minipage}
\begin{minipage}{0.5\textwidth}

 \center
 \includegraphics[width=\linewidth]{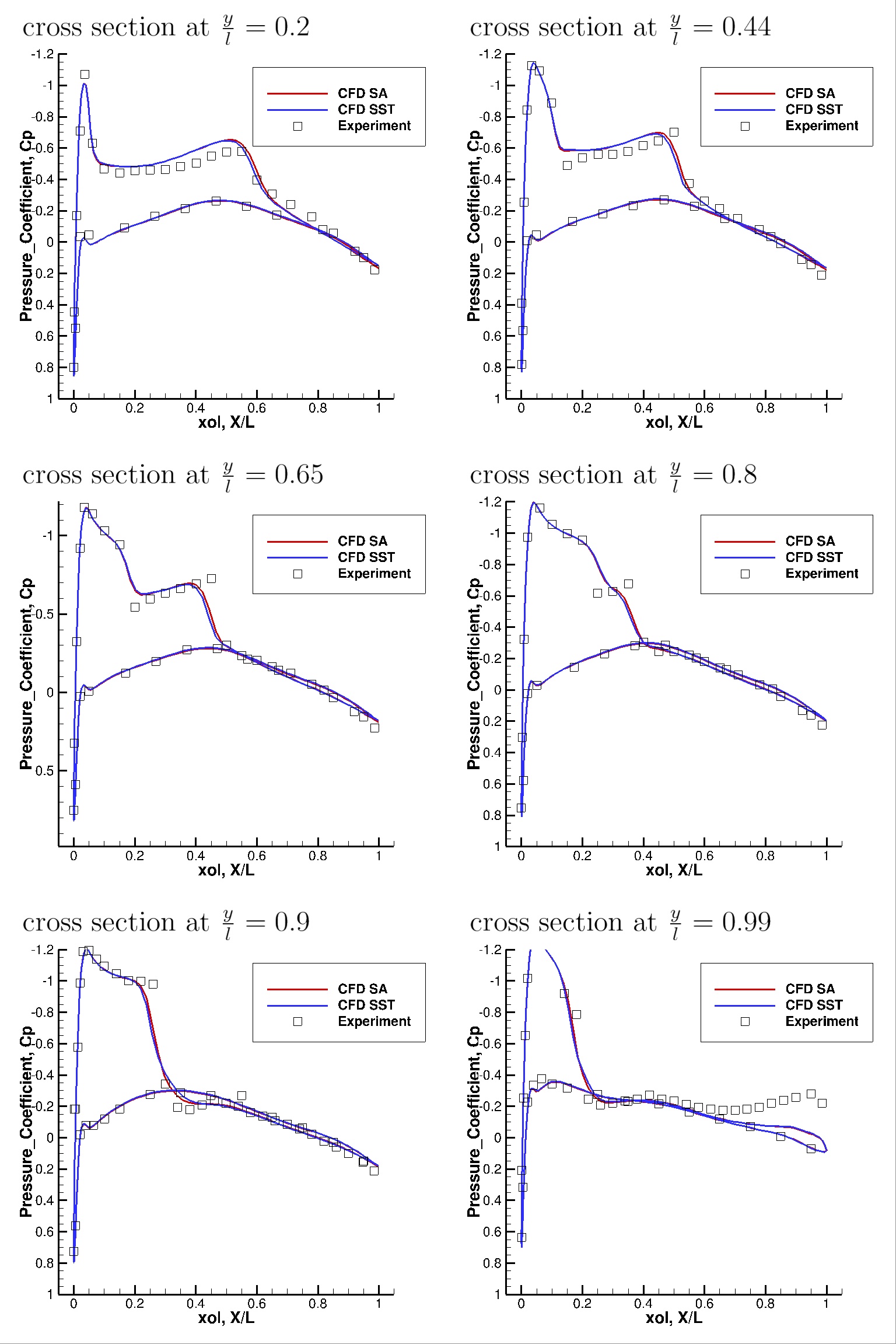}
 \caption{CFD and experimental pressure distribution}
 \label{picONERAM6expvalidation}

\end{minipage}
\end{figure}

For this test, we use a structured C-type mesh for the ONERA M6 wing, consisting of $306,577$ points and $294,912$ rectangular elements. Around the wing the elements follow a logarithmic thickness progression, resolving the boundary layer. The mesh features a farfield, a symmetry plane, and a Navier-Stokes wall for the wing surface as boundaries. \\
For the flow simulation, the Reynolds-averaged Navier–Stokes (RANS) equations with a shear stress transport (SST) turbulence model \cite{Menter94} are solved with a Jameson-Schmidt-Turkel (JST) scheme \cite{JST17}. Artificial dissipation coefficients for this test were set to $d^{(2)}=\frac{1}{2}$ and $d^{(4)}= \frac{1}{128}$. For time integration we use a simple implicit Euler scheme. \\
The parameterization is a three-dimensional FFD box around the wing, which has 10 cells in x-direction, 8 cells in y-direction, and 1 cell in z-direction, with the control points being only allowed to move in z-direction. Other deformations are suppressed to keep the span and chord length of the wing constant. Overall, this gives us 198 design parameters for the optimization. \\
We start with a validation of the mesh and solver settings. A comparison between the CFD solution and experimental wind tunnel data for the original ONERA M6 test case is plotted in Figure \ref{picONERAM6expvalidation}. The CFD simulation can predict the shock position on the upper wing surface well and shows a good correlation to the experimental data. The accuracy is comparable to other relevant numerical studies featuring the same test case \cite{NASA14, NASA02}. For cross-sections at $20\%$, $44\%$, $65\%$, and $90\%$ simulation results correspond well with the experimental data. Furthermore, both the SST and SA turbulence models yield very similar results. Nonetheless, the flow solver has some issues predicting the exact behavior around the merging double shock at $80\%$ of the length and on the wingtip at $99\%$, due to limitations in the turbulence model. More accurate solutions would require the use of a more involved Reynolds stress tensor model, and for reference, we refer the reader to the work of Jakirlić, Eisfeld, et. al. \cite{Eisfeld07}. In total, the test case setting gives us a decent resolution of the physical flow and is a stable basis for shape optimization. \\
For optimization, we minimize the drag coefficient $c_{D}$, with respect to a mix of equality and inequality constraints. The equality constraint keeps the lift coefficient of the original design $c_{L}=24.99$ lift counts and as inequality constraints, the minimal airfoil thickness is kept at the $5$ different cross-sections listed in Table \ref{tableminthickness}.
\begin{table}[ht]
\centering
 \begin{tabular}{|c c c c c c|}
 \hline
 position $\frac{y}{b}$ & 0.0 & 0.2 & 0.4 & 0.6 & 0.8 \\
 \hline
 minimum thickness & 0.077 & 0.072 & 0.066 & 0.060 & 0.054 \\
 \hline
 \end{tabular}
 \caption{Minimum thickness constraints for shape optimization.}
 \label{tableminthickness}
\end{table}
For comparison, we evaluate the optimization Algorithm \ref{constSQP}, with our approximated Hessian matrix, together with two other optimizers. First, a simple constrained gradient descent method with projection to the constraints, which can be obtained by simply using $B=\mathrm{I}$ in Algorithm \ref{constSQP}. Second, we use the SLSQP optimization algorithm as a reference point \cite{Kraft88}. This is the standard Quasi-Newton algorithm within the python SciPy optimize package \cite{2020SciPy-NMeth}. It can be seen as a typical example of an SQP algorithm using gradient-based, iterative updates, e.g., a BFGS formula, to determine an approximated Hessian. \\
For the reduced SQP algorithm, we use two different ways to assemble the hybrid Laplace-Beltrami operator. One on the surface mesh with weights $\epsilon_{1}= 56.9$, $\epsilon_{2}= 0.9$, and $\epsilon_{3}=0.1$. For the other we assemble everything in the volume mesh with weights $\epsilon_{1}= 0.0$, $\epsilon_{2}= 7.1$, and $\epsilon_{3}=0.1$.


\begin{figure}[ht]
 \center
 \includegraphics[width=\linewidth]{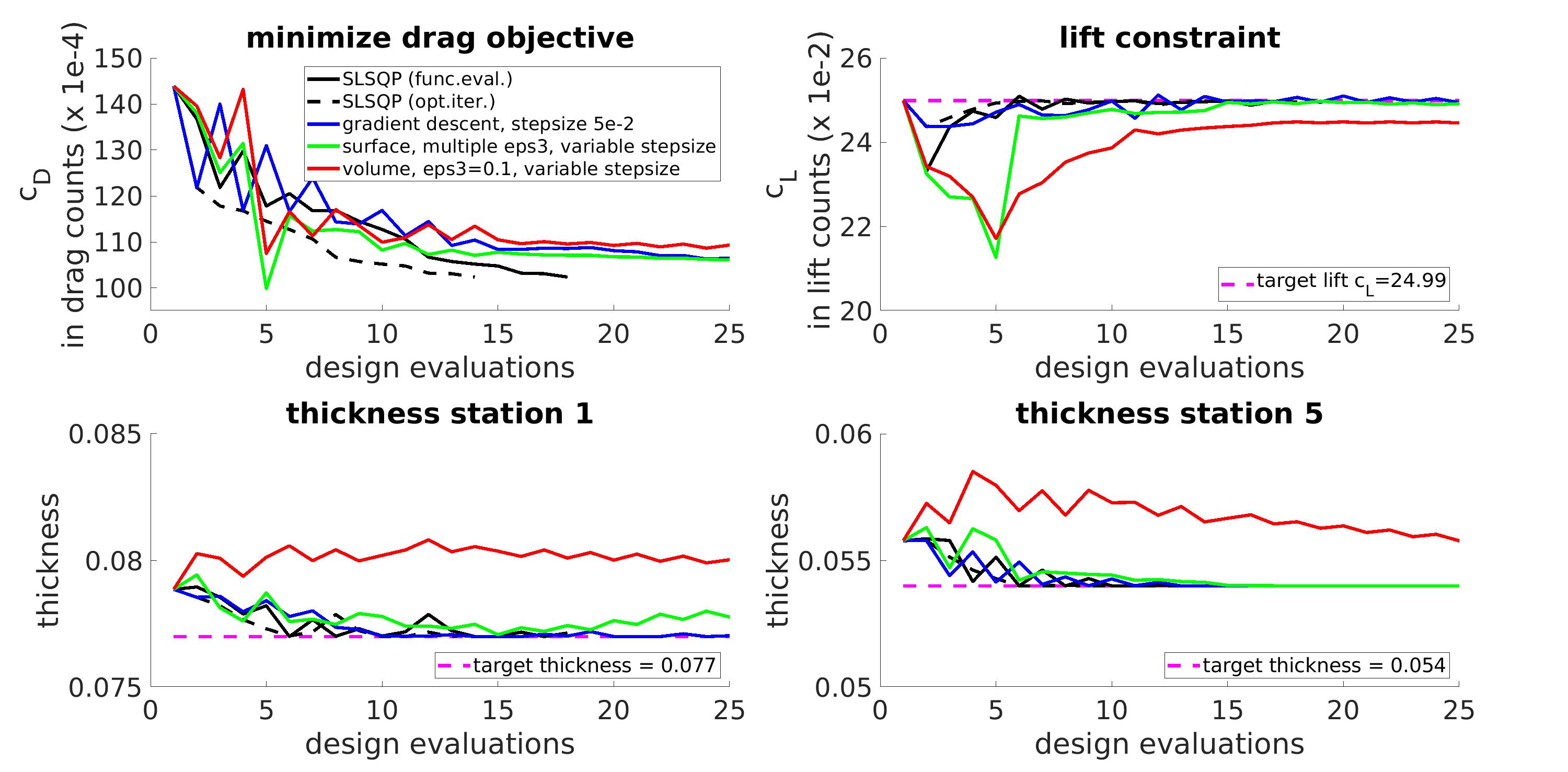}
 \caption{Comparison of different optimizers for the ONERA M6 test case}
 \label{picONERAM6optcomp}
\end{figure}

\begin{table}[ht]
\centering
 \begin{tabular}{| c | c c c c |}
  \hline
   & RSQP surface & RSQP volume & SLSQP & grad. desc. \\
  \hline
   $c_{D}$ (in drag counts) & $105.3$ & $108.1$ & $102.4$ & $105.6$ \\
  \hline
   relative reduction & $26.8 \%$ & $24.89 \%$ & $28.85 \%$ & $26.6 \%$ \\
  \hline
 \end{tabular}
 \caption{$c_{D}$ improvement for different optimization strategies.}
 \label{tableRSQPcomp}
\end{table}

Results of the optimization can be seen in Figure \ref{picONERAM6optcomp}. The objective and the constraints are plotted over the course of the optimization. We can observe that all three methods achieve an improvement in the objective function, from a starting value of $143.9$ drag counts, albeit at different levels. The exact resulting values are shown in Table \ref{tableRSQPcomp}. Using reduced SQP with the smoothing matrix assembled on the surface results in an optimized drag value of $105.3$ drag counts, this represents a $26.8 \%$ decrease in the objective function, while smoothing on the volume mesh results in a comparatively smaller improvement to $108.1$ drag counts, which is a $24.89 \%$ decrease. In comparison, gradient descent results in a drag coefficient of $105.6$ drag counts, i.e., a $ 26.6 \%$ drop. All of these optimizations result in very similar values, while the SLSQP algorithm can achieve an even smaller objective function value of $c_{D}=102.4$ drag counts, which represents a $28.85 \%$ decrease. At the same time, all of the optimization algorithms adhere to the optimization constraints very well. The lift constraint is kept up to a small error margin of less than $2.1 \%$. Geometric inequality constraints are also kept and can be observed to be active constraints in the achieved minimum as well. \\
Furthermore, we can observe how the optimizer oscillates between steps improving the objective function and steps increasing the feasibility, e.g., adherence to the constraints. This is a typical behavior for reduced SQP methods with approximated Hessian matrices, see for example \cite{Gherman07}. They aim at reducing the Lagrangian function which may lead to some steps increasing the objective function to obtain better feasibility in the constraints.

\begin{figure}[ht]
 \center
 \includegraphics[width=\linewidth]{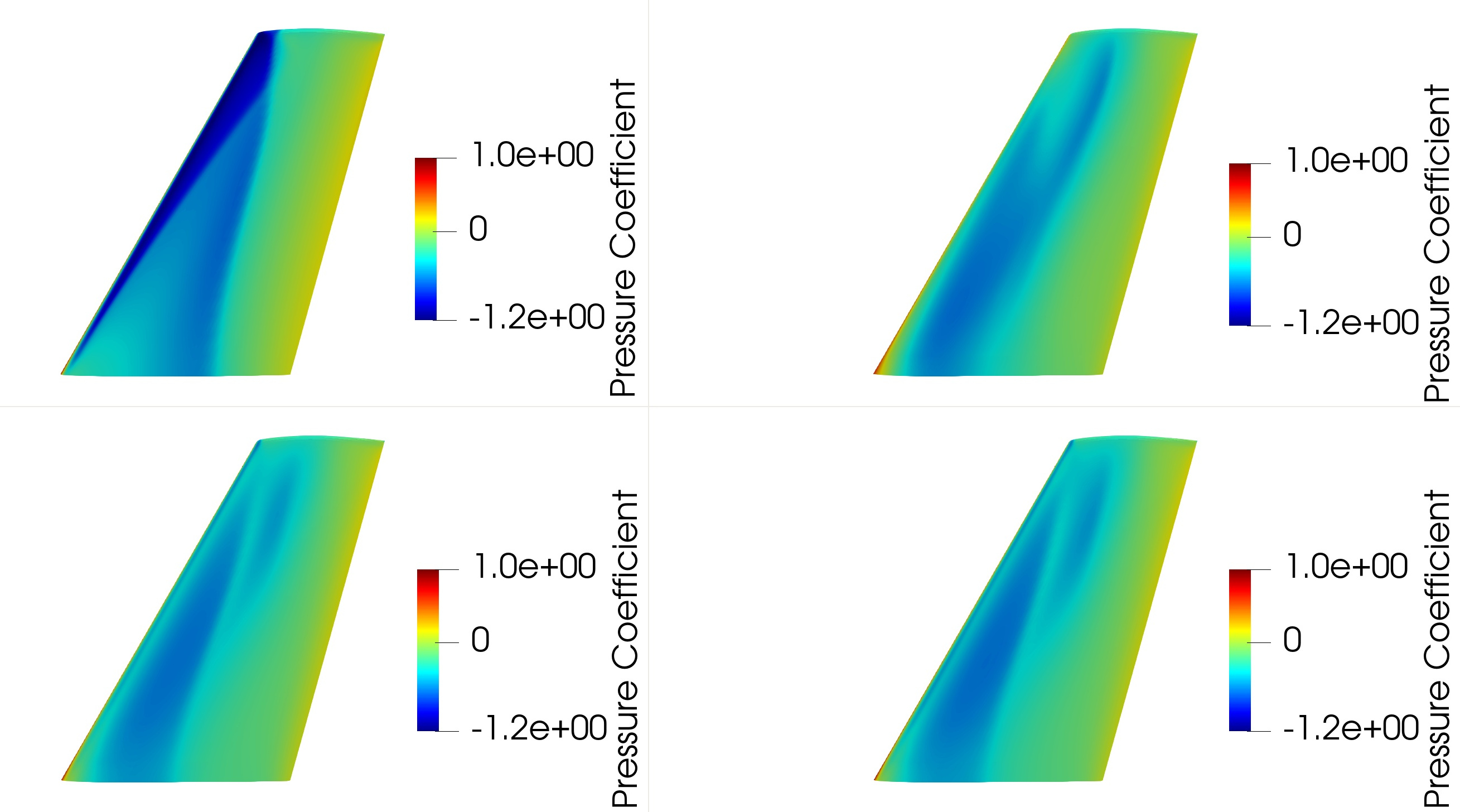}
 \caption{Comparison of optimized flow fields. Original design (upper left), result SLSQP (upper right), result gradient descent (lower left), result Sobolev smoothing (lower right)}
 \label{picONERAM6ffcomp}
\end{figure}

We can take a look at the flow fields around the final designs to achieve a better understanding of the minima found by the different optimization algorithms. A comparison is shown in Figure \ref{picONERAM6ffcomp}. In the upper left, we can see the original wing with the characteristic lambda shock on the surface. All of the used optimizers can remove most of the double shock system. This is important since shock-induced drag is an important source of the overall drag in this test case.

\begin{figure}[ht]
 \center
 \includegraphics[width=\linewidth]{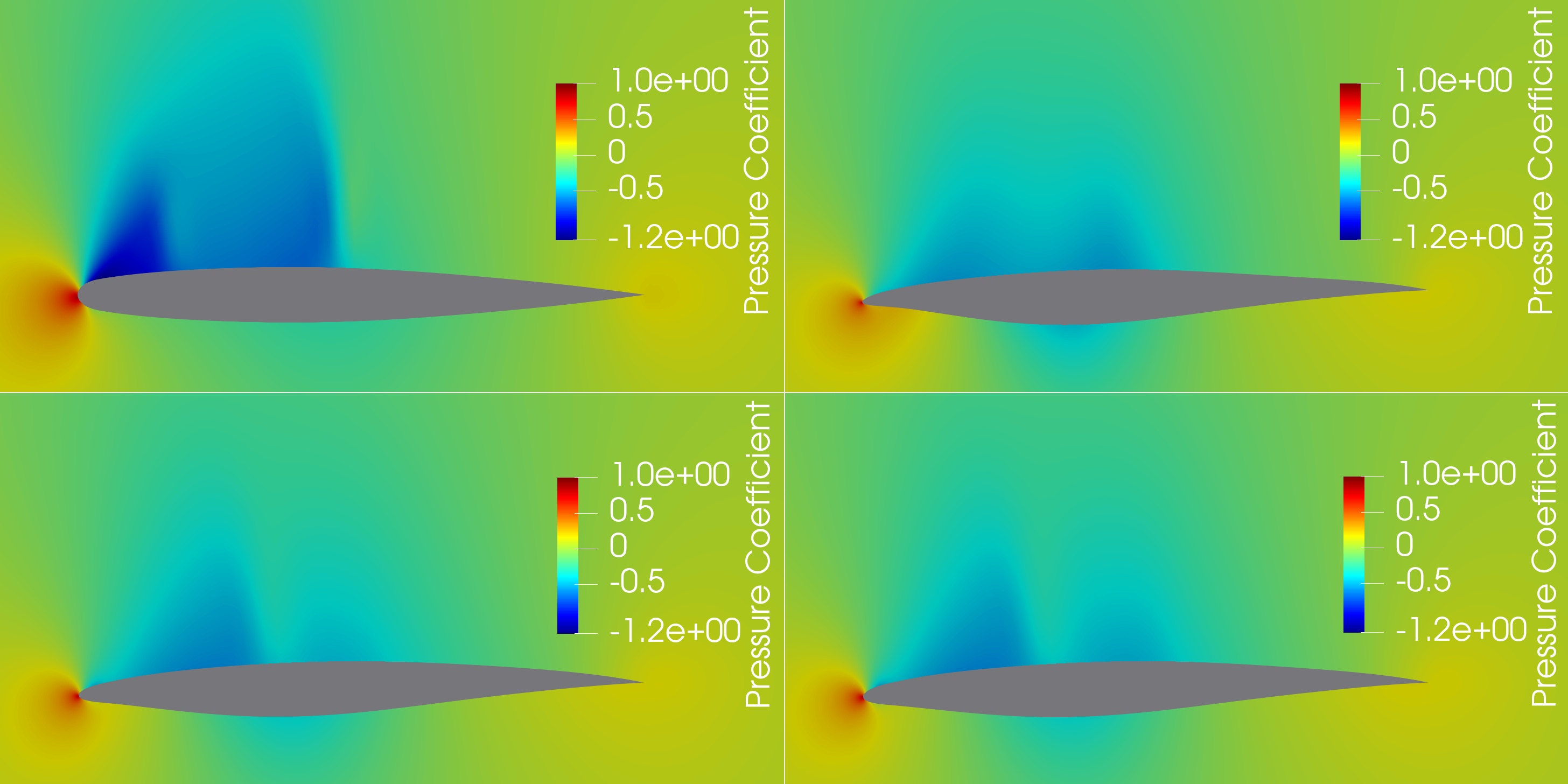}
 \caption{Cross-sections for optimized geometries. Original design (upper left), result SLSQP (upper right), result gradient descent (lower left), result Sobolev smoothing (lower right)}
 \label{picONERAM6cscomp}
\end{figure}

A second visualization of the computed designs is shown in Figure \ref{picONERAM6cscomp}, namely wing cross-sections at $65 \%$ of the length, together with the surrounding flow fields. This gives an impression of the thickness and surface curvature for the different optimized designs. We can observe that the SLSQP algorithm results in a thinner leading edge of the wing, while the other optimizations result in rounder leading edges. This is a sign of the different methods finding different local minima, a common phenomenon in nonlinear optimization. \\
Overall, we can observe that the application of our optimization method, with smoothing as a Hessian approximation, yields a more regular search direction and a faster rate of convergence for the optimizer if compared to gradient descent. Here, we can observe the effect of changing from a $\mathrm{L}^{2}$ to a $\mathrm{H}^{1}$ gradient, i.e., a reinterpretation of the derivatives in a different scalar product.


\begin{table}[H]
\centering
 \begin{tabular}{| c | c c c c |}
  \hline
   & RSQP surface & RSQP volume & SLSQP & grad. desc. \\
  \hline
   time (sec) & $2270166.91$ & $263644.04$ & $173046.68$ & $283626.55$ \\
  \hline
   time per step (sec) & $10806.68$ & $10545.76$ & $9613.7$ & $11345.06$ \\
  \hline
 \end{tabular}
 \caption{Averaged time consumption for different optimizers.}
 \label{tableRSQPtimeandmem}
\end{table}

\begin{table}[H]
\centering
 \begin{tabular}{| c | c c |}
  \hline
   & One Shot preconditioned & One Shot grad. desc. \\
  \hline
   time skylake (sec) & $41280.76$ & $39346.5$ \\
  \hline
 \end{tabular}
 \caption{Averaged time consumption for different One Shot optimizers.}
 \label{tableOneShottimeandmem}
\end{table}

Another important factor to take into account is the runtime necessary to achieve the presented optimization results. The computational costs of the Hessian computation are negligible compared to the time needed to solve the flow and (multiple) adjoint simulations, as we will see now. For this, we run all algorithms on the same setting on a high-performance computing (HPC) cluster. The test architecture are two Intel skylake XEON SP 6126 nodes with $12$ cores and $96$ GB of RAM each, where the simulations are executed on $48$ parallel MPI processes. We perform a number of $25$ optimizer steps, or less if convergence is reached before. \\
The measured runtimes can be seen in Table \ref{tableRSQPtimeandmem}. The SciPy optimizer excels in this setting since it requires only $18$ steps to converge and is thus much faster in overall time. Also applying the reduced SQP method with surface smoothing shows are very competitive performance in terms of runtime. It should be also taken into account that the flow and adjoint solutions are used as restart values in the next optimization. Therefore, the extra cost for the Hessian computation for the reduced SQP algorithm, compared to gradient descent, can be completely offset by smoother deformations leading to faster convergence in the next step. \\


Next, we assess the performance of our preconditioning method when applied in the context of One Shot optimization. Here, the hybrid Laplace-Beltrami operator from Equation \eqref{hybridlaplacebeltrami} takes the place of the Hessian preconditioner in the One Shot algorithm \ref{algoneshotdircon}. \\
Before we present the results there are some settings for the algorithm to be considered. First, we must choose an adequate number of inner piggyback steps between design updates. Obviously, this will depend on the exact test case, to ensure that the flow and adjoints are recovered after each design update. For the presented ONERA M6 test case using $10$ piggyback steps shows the desired behavior. Linked to this is the maximum allowed update step size. While the preconditioning itself enforces a scaling on the gradient, numerical experiments have shown the importance of further restrictions. Together with the number of piggyback steps, this choice helps to achieve a good combination of adjoint stability and design progress. \\
We will assemble the Laplace-Beltrami operator on the surface mesh, since it is computationally cheap and showed better performance with converged gradients. As weights we use the same setting as previously, i.e., $\epsilon_{1}= 56.9$, $\epsilon_{2}= 0.9$, and $\epsilon_{3}=0.1$. \\
Once again we compare our approach to gradient descent and SLSQP. The gradient descent algorithm in the context of One Shot needs preconditioning as well to remain stable. Here, we restrict the max step size, which is equivalent to using a multiple of the identity as a preconditioner. The SLSQP algorithm is easy to set up as well, since it only requires handing the function values and gradients computed by piggyback to the SciPy interface.


\begin{figure}[ht]
 \center
 \includegraphics[width=\linewidth]{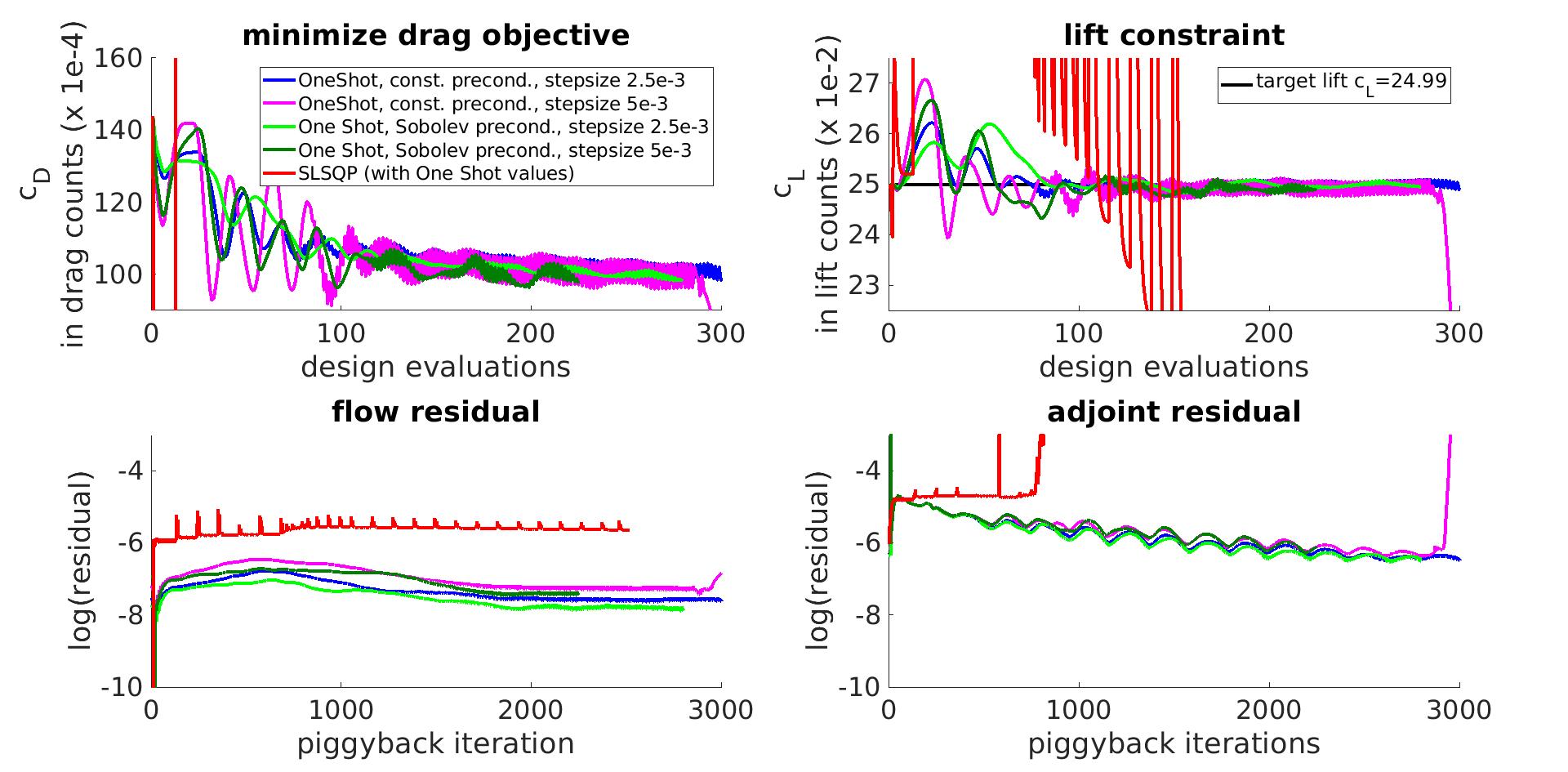}
 \caption{Comparison of different One Shot optimizations for the ONERA M6 test case}
 \label{picONERAM6optcomp_os}
\end{figure}

We can see the results for One Shot optimization in Figure \ref{picONERAM6optcomp_os}. From the plot, we can draw several observations. First, our Hessian approximation, shown in dark green, performs a successful optimization and achieves a minimal drag coefficient as low as $100.5$ drag counts, with our proposed smoothing preconditioner. This represents a $30.14\%$ improvement, while having $c_{L}=24.985$ lift counts, which is ca. $0.02\%$ deviation from the target lift. Here, two settings for the steep sizes are shown. A heuristic is used to limit the maximal design update to the values given in Figure \ref{picONERAM6optcomp_os}. As can be seen by the light green line, limiting the value from $5e^{-3}$ to $2.5e^{-3}$ slows the algorithm, but gives a similar result. \\
Second, the gradient descent method, shown in blue, also performs a successful optimization, in accordance with the theory stating that for small enough update steps the One Shot algorithm must be convergent. The optimization results in an average drag coefficient of $101.6$ drag counts, or $29.38\%$ improvement, while showing small oscillating behavior in the end. Comparing the dark green and blue lines in Figure \ref{picONERAM6optcomp_os}, it can be observed that constant preconditioners lead to higher oscillations for equal step sizes. Also, they tend to diverge faster if the maximal design updates are not limited carefully, as shown by the magenta line in Figure \ref{picONERAM6optcomp_os}. This means that the use of an approximated, reduced shape Hessian as a preconditioner will allow the One Shot algorithm to perform larger design updates and result in a faster convergence to an optimal design. While at the same time keeping the additional time to assemble the Sobolev smoothing operator low, as can be seen from the runtimes measured in Table \ref{tableOneShottimeandmem}. \\
The third observation is that a direct application of the SLSQP algorithm diverges after a few optimization steps. This can be explained as well since it is well known that BFGS updates will suffer from a high number of matrix updates. A problem, which is further increased by using approximated One Shot gradients of questionable precision. \\
Two more general points to notice are how convergent One Shot optimizations adhere to all design constraints and how in general One Shot needs more optimization iterations than working with converged values. This is natural since we intend to do more, and in tendency smaller, design updates, albeit each at a fraction of the price for computing exact function and gradient values.


\begin{figure}[ht]
 \center
 \includegraphics[width=\linewidth]{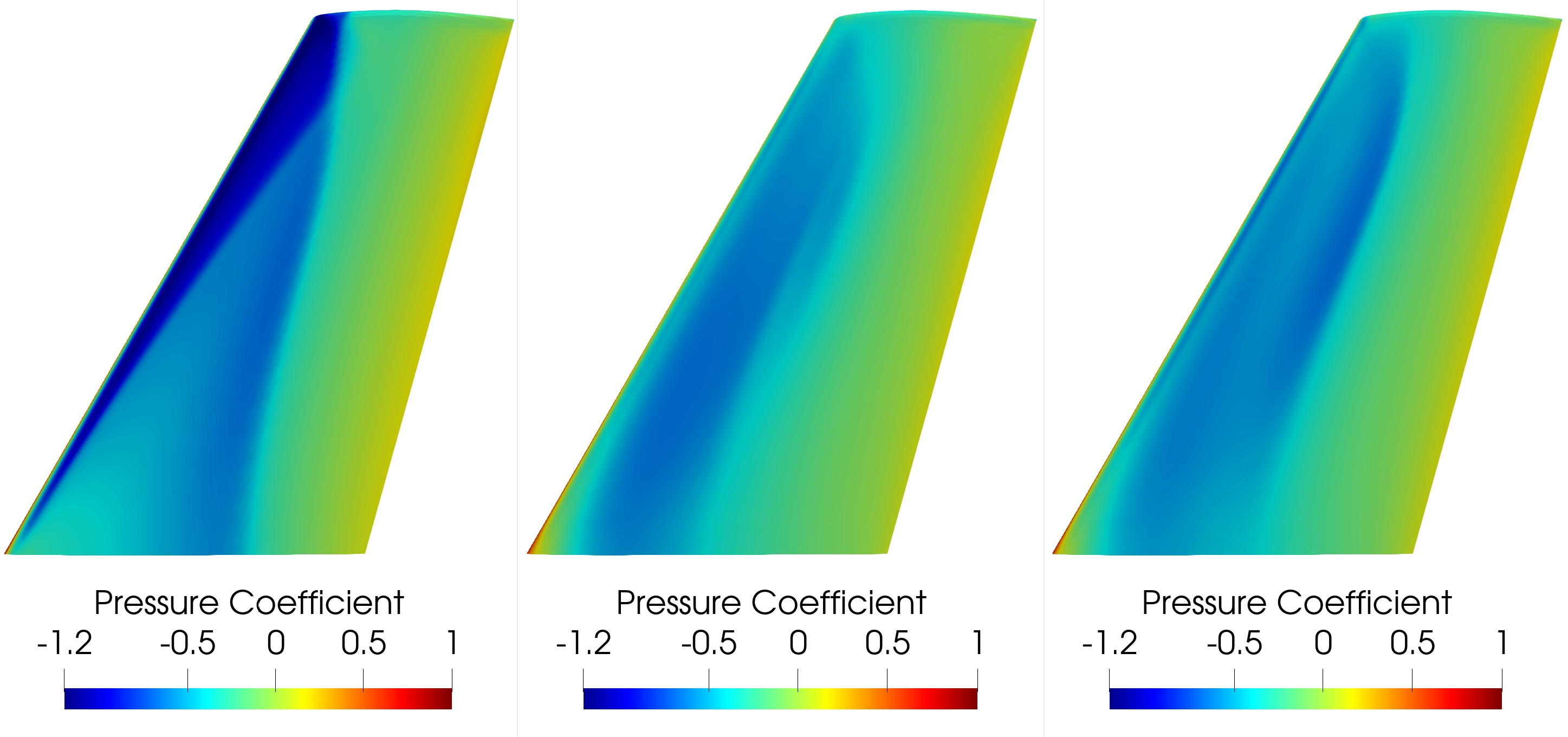}
 \caption{Pressure coefficient on the wing surface for optimized solutions. Original design (left), result Sobolev smoothing (center), result gradient descent (right)}
 \label{picOM6OScp}
\end{figure}

\begin{figure}[ht]
 \center
 \includegraphics[width=\linewidth]{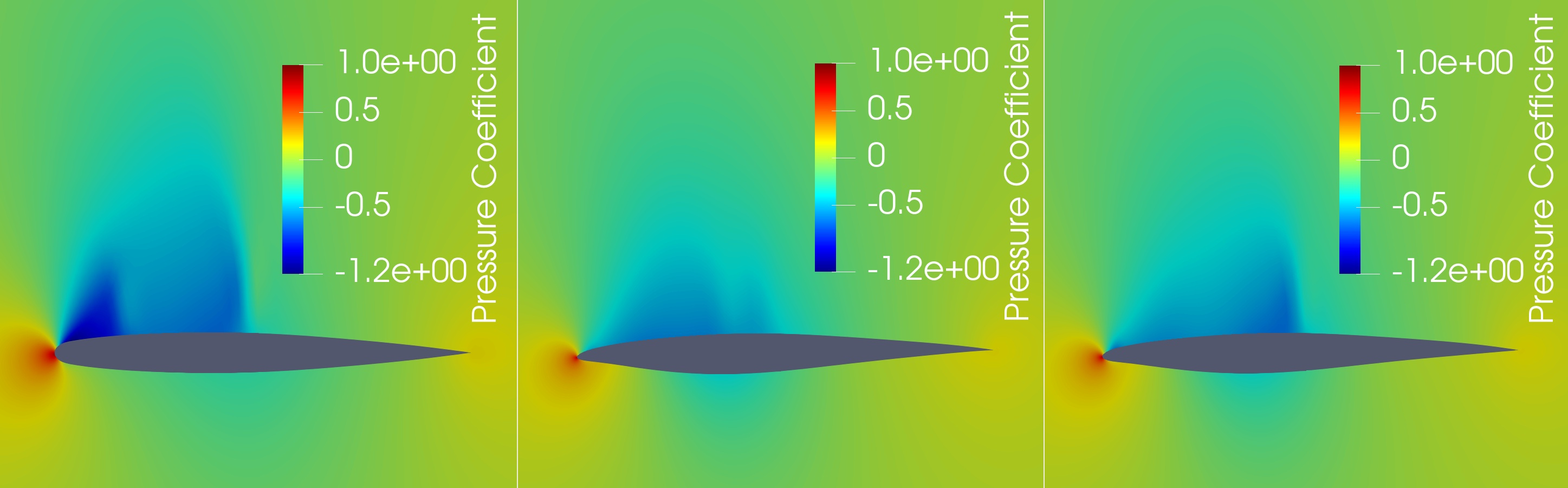}
 \caption{Cross section at $\frac{y}{l}=0.65$ on the wing for optimized solutions. Original design (left), result Sobolev smoothing (center), result gradient descent (right)}
 \label{picOM6OScrosssect}
\end{figure}

After considering the optimization progress, we will investigate the resulting wing profiles in more detail. In Figure \ref{picOM6OScp}, the pressure coefficient on the surface is shown in relation to the original distribution. As we can observe, the One Shot method efficiently removes the characteristic lambda shock. Both optimizers remove the shock, thereby reducing the induced drag from the system. In Figure \ref{picOM6OScrosssect}, we can see the cross-section at $65\%$ of the wing length. Here, we can observe how the double shock gets reduced, and it is also worth noting how the final cross-sections are very close to the one found by the SLSQP algorithm, with converged values, from Figure \ref{picONERAM6cscomp}.


Next, we take a look at the runtime of the One Shot algorithm. All computations are done on an HPC cluster and we use the same setting as in the previous runtime test. The measured times can be found in Table \ref{tableOneShottimeandmem}. For this test, we perform $250$ One Shot design updates with each algorithm. As we can see, the use of a One Shot approach leads to significant improvements in runtime. This proves that performing a higher number of cheap design updates can be advantageous compared to doing a few costly steps. \\
To better summarize this point, we can calculate the so-called retardation factor. That is the quotient between the time for the complete optimization and one full convergence of the flow solver. Since converging the flow simulation requires $2224.1$ sec. on the two skylake nodes, we end up with the factors shown in Table \ref{tableretardfact}. The numbers presented here clearly show that applying One Shot can give crucial advantages in runtime. The presented Hessian approximation techniques can be used in this context as efficient preconditioning to ensure fast and stable convergence of the One Shot algorithm. All while keeping the computational cost low. \\
If we compare the time retardation factors, we can conclude that the One Shot optimization algorithm achieves a full optimization in between $18-20$ times the wall clock time needed for a simple flow simulation. At the same time, when working with exact gradients, even the competitive SLSQP implementation requires ca. $77$ times the runtime at best. The retardation factors reveal that One Shot optimization with our proposed Sobolev smoothing is between $3.98-4.19$ times more efficient, in terms of runtime than established Quasi-Newton methods with exact derivatives. \\
For One Shot optimizers it is common practice to calculate retardation factors w.r.t. the iteration count, too. Here, the base value are the $15371$ iterations necessary to fully converge the flow solver. For the optimizer, we need to converge the flow and adjoint solvers to a certain level, in order to gain initial restart solutions. Then for the given setting the One Shot optimization must perform $1$ flow and $2$ adjoint steps during each piggyback iteration and at the end, the flow is fully converged again. This results in the iteration retardation factors stated in Table \ref{tableretardfact}. The discrepancy between the time and iteration retardation is largely due to the additional restarts and file I/O our implementation has to perform. Overall, we can conclude that the presented smoothing techniques are an effective preconditioning method for One Shot algorithms and result in a highly effective optimization strategy.

\begin{table}[ht]
\centering
 \begin{tabular}{| c | c c |}
  \hline
   & time retardation & iteration retardation\\
  \hline
   RSQP surface & $121.47$ & - \\
  \hline
   RSQP volume & $118.54$ & - \\
  \hline
   SLSQP & $77.81$ & - \\
  \hhline{|=|==|}
   One Shot surface & $18.56$ & $4.08$ \\
  \hline
   One Shot grad. desc. & $19.53$ & $4.17$ \\
  \hline
 \end{tabular}
 \caption{Retardation factors for optimization}
 \label{tableretardfact}
\end{table}

\section{Conclusion}
\label{secconc}

In this paper, we have shown how Sobolev smoothing further improves performance in aerodynamic shape optimization, even in situations where a smooth parameterization is used anyway. This is achieved by interpreting Sobolev smoothing as an approximation of the reduced shape Hessian, which is then incorporated into the parameterized setting via a tailor-made adaptation of the Faà di Bruno formula. \\
Although valid for all kinds of parameterizations, the resulting expressions based on our modified Faà di Bruno formula are particularly easy to implement for linear parameterizations. This makes the novel methodology presented herein convenient for a wide variety of industrial and engineering problems, since most of the common parameterizations, such as Hicks-Henne functions or FFD boxes, are typically linear. \\
Furthermore, our modified Faà di Bruno formula can immediately be used as the KKT system of an approximated Newton-type optimizer, such as One Shot. The Laplace-Beltrami operator for Sobolev smoothing is extended by coefficients for zeroth and second-order smoothing. We compare this One Shot optimizer with a state-of-the-art SLSQP optimizer for industry-sized problems. \\
This is demonstrated by a shape optimization of an  ONERA M6 wing for drag reduction, subject to constant lift and thickness. For comparable gain in drag reduction, we achieve a time retardation factor of $18.56$ with our One Shot approach, compared to $77.81$ with the SLSQP optimizer. This represents a decrease factor of $4.19$ in terms of runtime.

\section{Acknowledgments}
\label{secacknowledge}

The authors acknowledge the support by Lisa Kusch concerning discussions on One Shot optimization and by Payam Dehpanah (both Scientific Computing Group, TU Kaiserslautern) for creating computational meshes for the numerical test cases. In addition, the authors acknowledge the productive discussions with Bernhard Eisfeld (DLR Braunschweig) and his valuable feedback on the ONERA M6 test case used in this paper. The presented computations were performed with resources provided by the Computing Center (RHRK) at TU Kaiserslautern.

\section*{Appendix A: Implementation}
\label{secAppendix}

The methods discussed in this paper were implemented within the SU2 solver \cite{SU2016}. SU2 is an open-source multiphysics simulation and design software. For the scope of our work, the most important components are the available design optimization capabilities and the included discrete adjoint solver \cite{Albring15}. The adjoint solver is based on algorithmic differentiation, via operator overloading, and ensures that the adjoint solver is consistent for the governing flow and turbulence equations. \\
We implement an efficient Sobolev smoothing approach, as introduced in previous sections, by providing additional functionality in two main areas.
\begin{enumerate}
 \item An extension of the discrete adjoint flow solver to enable the assembly, solution, and output of the approximated Hessian matrix. For this, the Laplace-Beltrami operator is constructed on the mesh level and multiplications with the parameterization Jacobian are calculated.
 \item The addition of an SQP optimizer capable of working with approximated Hessian matrices. Together with the existing design capabilities and the new Hessian approximation, this allows for efficient optimization. These capabilities are added to the existing python optimization toolbox FADO.
\end{enumerate}

\subsection*{Gradient Smoothing Solver}

The core of the presented implementation is a gradient smoothing solver capable of constructing the Hessian approximation from Equation \eqref{systemmatrixequation} and handing it to an optimizer. To explain the key ideas of the implementation first, we need to introduce the basic structure of the SU2 C++ code. The current layout is based on a hierarchy of classes, which fulfill different roles for the multiphysics simulation.  \\
\begin{figure}[H]
 \center
 \includegraphics[width=\linewidth]{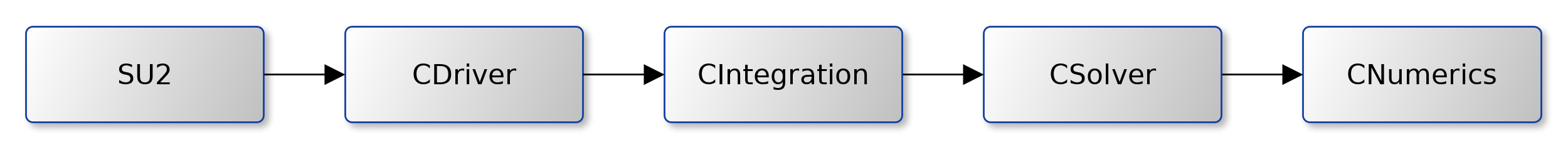}
 \caption{Class hierarchy in SU2}
 \label{picSU2classhierachy}
\end{figure}
The general class hierarchy of SU2 is shown in Figure \ref{picSU2classhierachy}. For our purposes the two most important levels are:
\begin{enumerate}
 \item \textbf{driver classes:} These are high-level classes derived from a common interface named 'CDriver'. They determine the kind of physical problem and options can include whether this is a flow, structural, or heat transport problem, or an adjoint run. They also handle the spatial layout of the problem being all on a single zone, e.g., only flow solutions, or having multiple zones, e.g., fluid-structure interactions. Finally, they control the temporal nature of the involved physics, which could be a steady-state problem with pseudo-time-stepping or a time-dependent problem. Here, we are interested in the 'CDiscAdjSingelzoneDriver' class, which runs the discrete adjoint simulation of a flow problem, thus computing the sensitivities of an objective function with respect to the mesh nodes.
 \item \textbf{solver classes:} These classes implement a specific numerical solver and represent a certain set of governing equations. This could be flow, turbulence, structure, or any other kind of partial differential equation. All solver classes are based on a common interface named 'CSolver'. For our purposes, we are interested in performing operations on the solution of the adjoint flow solver class 'CDiscAdjSolver'.
\end{enumerate}
Considering the structure, it is clear that to implement a Sobolev gradient treatment and Hessian approximation, we need to adapt the existing discrete adjoint framework on those two levels. \\
First, we implement a new solver class 'CGradientSmoothingSolver', which has to assemble and solve the discretized PDE from Equation \eqref{completesmoothingequation}. From the structure of the Laplace-Beltrami operator, it is obvious that finite elements can be used to get an efficient discrete representation. This is done by extending the existing finite element implementation in SU2, provided by the structural mechanic features of the code. Several new functions are necessary to overcome two main hurdles. First, we need to extend the numerical integration to use second-order Gauss quadrature. This is necessary to accurately represent the identity with finite elements. Second, the existing elements and transformations are extended to deal with surface cells. For example, a two-dimensional surface cell is embedded in a three-dimensional mesh, meaning the transformation to reference elements maps $\R^{3} \rightarrow \R^{2}$ and this change in dimension requires some extensions to the existing implementation. Overall, this solver assembles the discrete equivalent of the Laplace-Beltrami operator on the computational mesh. \\
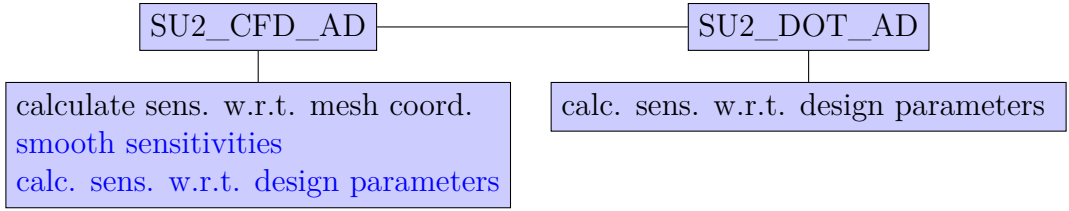
\begin{figure}[H]
 \begin{center}
  \begin{tikzpicture}[node distance = 1.5cm, auto]
   \node (driver) [Base] {SU2\_CFD\_AD}
    child { node (drivers) [Base, below=1em of driver] {calculate sens. w.r.t. mesh coord. \\
    \textcolor{blue}{smooth sensitivities} \\
    \textcolor{blue}{calc. sens. w.r.t. design parameters} } }
    child { node (solver) [Base, right=10em of driver] {SU2\_DOT\_AD}
      child { node (solvers) [Base, below=1em of solver] {calc. sens. w.r.t. design parameters } } };
  \end{tikzpicture}
 \end{center}
 \caption{Different executables and their capabilities in SU2}
 \label{picdifferentexe}
\end{figure}
Next, the parameterization Jacobian $\D{p} M(p)$ and its transposed must be available in some form. Normally, all driver and solver classes exist within an executable SU2\_CFD\_AD and the 'CDiscAdjSinglezoneDriver' only computes sensitivities with respect to the mesh coordinates. The chain rule evaluation to get sensitivities w.r.t. design parameters is then handled by a different executable named SU2\_DOT\_AD, as is depicted in Figure \ref{picdifferentexe}. This modular setup allows the adjoint solvers to be independent of the choice of parameterization, yet for our implementation, it is necessary to transfer functions for the projection of sensitivities from SU2\_DOT\_AD.
Our implementation enables the computations of all the steps inside of a single instance of the class 'CGradientSmoothingSolver', including the computation of the Jacobian matrix of the mesh parameterization. Reviewing Equation \eqref{completesmoothingequation} carefully shows that the full matrices are not needed at all.Consider the involved dimensions, $\D{p} M(p) \in \R^{(d \cdot n_m) \times n_{p}}$, $(\epsilon_1 \mathrm{I} - \epsilon_2 \triangle) \in \R^{(d \cdot n_m) \times (d \cdot n_m)}$, and $B \in \R^{n_{p} \times n_{p}}$. We expect $n_{p} << n_{m}$ and therefore it is much more efficient to only construct the complete matrix $B$. If we have a way to compute arbitrary scalar products with $\D{p} M(p)$ and $\D{p} M(p)^{T}$ this can be achieved. \textit{Algorithmic Differentiation} (AD) is an efficient way to calculate derivatives \cite{Griewank08}, and is especially efficient if only matrix-vector products with Jacobians are required. AD offers two basic options called the forward and reverse mode. Given a function $M: \R^{n_p} \rightarrow \R^{d \cdot n_m}$ and arbitrary vectors $\dot{x} \in \R^{n_p}, \bar{y} \in \R^{d \cdot n_m}$ we can compute
\begin{equation}
\tag{A.1}\label{eqAD}
\begin{aligned}
 \dot{y} & = \D{p} M(p) \dot{x} & \text{(forward mode)} \\
 \bar{x} & = \D{p} M(p)^{T} \bar{y} & \text{(reverse mode).}
\end{aligned}
\end{equation}
In our implementation, we use the AD tool CoDiPack \cite{Sagebaum19} to efficiently calculate the derivatives. CoDiPack is already the standard AD tool in SU2 and is heavily utilized to solve the adjoint equation. Here, we use the available CoDiPack features to efficiently evaluate the two expressions in Equation \eqref{eqAD}. With all these functions in place, we can assemble and solve Equation \eqref{completesmoothingequation} in SU2, as well as extract the left-hand side matrix for use as a Hessian approximation. \\
Finally, the existing 'CDiscAdjSinglezoneDriver' class is extended in several ways. We add an instance of the new 'CGradientSmoothingSolver', and couple it with the discrete adjoint solver as a postprocessing step. Together, they enable us to calculate $D_{p} L$ and $B$ with one adjoint simulation run. \\

\subsection*{Piggyback Driver}

The One Shot algorithm \ref{algoneshotdircon} is built around the idea of a piggyback iteration. This means we compute a coupled iteration of flow and adjoints, where the current flow state $u_{j}$ is used to compute the current adjoint state $\lambda_{j}$. While we will not go into specific details about the implementation, it is worth pointing out the key idea and how this can be set up. \\
Within SU2 the general adjoint logic is included within the driver classes, derived from a common interface 'CDriver', see Figure \ref{picSU2classhierachy} for the class hierarchy. Driver classes represent a certain type of physical problem, e.g., flow, structure, adjoint, etc., and the driver controls the time-stepping process and calls the involved solvers. \\
For many adjoint solvers, including the SU2 implementation, adjoints are computed using the idea of reverse accumulation. The algorithm is based on a theorem by Christianson regarding AD of fixed-point iterations \cite{Christianson98}. Usually, when applying the reverse mode of AD to an iterative procedure, e.g., $u_{j} = G(u_{j},m)$, one needs to store all intermediate states $u_{1},\dots,u_{J}$ for the derivative computation. Of course, this results in unfeasible memory overheads. The theorem states that for a converged fixed point solution $u_{*}$ it is sufficient to store only this fixed point to compute the exact derivatives.

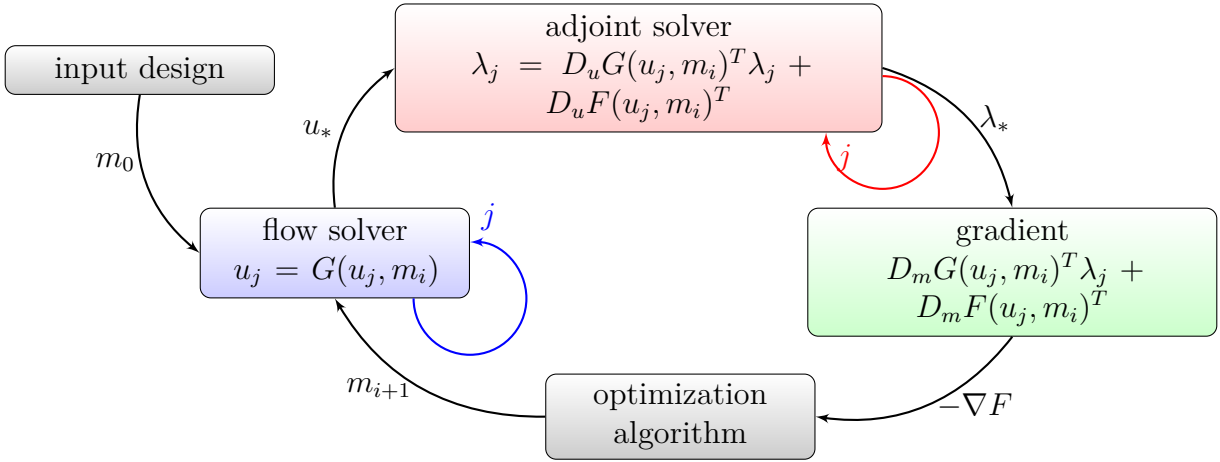
\begin{figure}[ht]
 \begin{center}
  \tikzstyle{block} = [rectangle, draw,text centered, minimum height=0.5em, top color=white,bottom color=black!20]
  \tikzstyle{line} = [draw, -latex']
  \begin{tikzpicture}
     \node[block, text width=8em, rounded corners, top color=white,bottom color=blue!20] (Primal) {flow solver\\$u_{j} = G(u_{j}, m_{i})$};
     \node[block, text width=15em, rounded corners, above right = 1cm and -1cm of Primal, top color=white,bottom color=red!20] (Adjoint) {adjoint solver\\$\lambda_{j} = D_{u} G(u_{j}, m_{i})^{T} \lambda_{j} + D_{u} F(u_{j}, m_{i})^{T}$};
     \node[block, text width=12.5em, rounded corners, below right = 1cm and -1cm of Adjoint, top color=white,bottom color=green!20] (Gradient) {gradient\\$D_{m} G(u_{j}, m_{i})^{T} \lambda_{j} + D_{m} F(u_{j}, m_{i})^{T}$};
     \node[block, text width=8em, rounded corners, below right = 1cm and 1cm of Primal, top color=white,bottom color=black!20] (Opt) {optimization\\algorithm};
     \node[block, text width=8em, rounded corners, above left = 1.5cm and -1.0cm of Primal, top color=white,bottom color=black!20] (Start) {input design};
     \draw[line, thick, black, bend left] (Primal.north) to node[left] {$u_{*}$} (Adjoint.west);
     \draw[line, thick, black, bend left] (Adjoint.east) to node[right] {$\lambda_{*}$} (Gradient.north);
     \draw[line, thick, black, bend left] (Gradient.south) to node[right] {$-\nabla F$} (Opt.east);
     \draw[line, thick, black, bend left] (Opt.west) to node[left] {$m_{i+1}$} (Primal.south);
     \draw[line, thick, black, bend right] (Start.south) to node[above left] {$m_0$} (Primal.west);
     \draw [line, thick, blue] (Primal.south east) ++(-0.75cm,0pt) arc [start angle=-180, end angle=90, radius=0.75cm] node[above right]{$j$};
     \draw [line, thick, red] (Adjoint.south east) ++(0pt,+0.75cm) arc [start angle=90, end angle=-180, radius=0.75cm] node[below right]{$j$};
  \end{tikzpicture}
 \end{center}
 \caption{Optimization process with reverse accumulation}
 \label{figOptRA}
\end{figure}

In terms of data flow, the situation is depicted in Figure \ref{figOptRA}. First, an instance of the flow driver executes a pseudo-time-stepping to solve the flow equation, then the resulting solution $u_{*}$ is handed to the adjoint driver. There an iteration is set up using reverse-mode AD and then executed until a converged adjoint state $\lambda_{*}$ is reached. \\
For our implementation, we introduced a new piggyback driver class named 'COneShotSinglezoneDriver'. Following the idea presented in Equation \eqref{eqpiggyback} the data flow is changed to the layout depicted in Figure \ref{figOptOS}.

\begin{figure}[ht]
 \begin{center}
  \tikzstyle{block} = [rectangle, draw, text centered, minimum height=0.5em, top color=white,bottom color=black!20]
  \tikzstyle{line} = [draw, -latex']
  \begin{tikzpicture}
     \node[block, text width=10em, rounded corners, node distance=4cm,top color=white,bottom color=blue!20] (Piggy) {flow solver\\$u_{j} = G(u_{j}, m_{i})$\\adjoint solver\\$D_{u} N(u_{j}, \lambda_{j}, m_{i})^{T} = 0$};
     \node[block, text width=8em, rounded corners, right = 4cm of Piggy, top color=white,bottom color=green!20] (Gradient) {gradient\\$D_{m} N(u_{j}, \lambda_{j}, m_{i})$};
     \node[block, text width=8em, rounded corners, below right = 1.5cm and 1cm of Primal, top color=white,bottom color=black!20] (Opt) {optimization\\algorithm};
     \node[block, text width=8em, rounded corners, above left = 1.5cm and -0.75cm of Primal, top color=white,bottom color=black!20] (Start) {input design};
     \draw[line, thick, black, bend left] (Piggy.east) to node[above] {$u_{*}, \lambda_{*}$} (Gradient.west);
     \draw[line, thick, black, bend left] (Gradient.south) to node[right] {$-\nabla F$} (Opt.east);
     \draw[line, thick, black, bend left] (Opt.west) to node[left] {$m_{i+1}$} (Piggy.south);
     \draw[line, thick, black, bend right] (Start.south) to node[above left] {$m_0$} (Piggy.west);
     \draw [line, thick, blue] (Piggy.south east) ++(-0.75cm,0pt) arc [start angle=-180, end angle=90, radius=0.75cm] node[above right]{$j$};
  \end{tikzpicture}
 \end{center}
 \caption{Optimization process with piggyback}
 \label{figOptOS}
\end{figure}
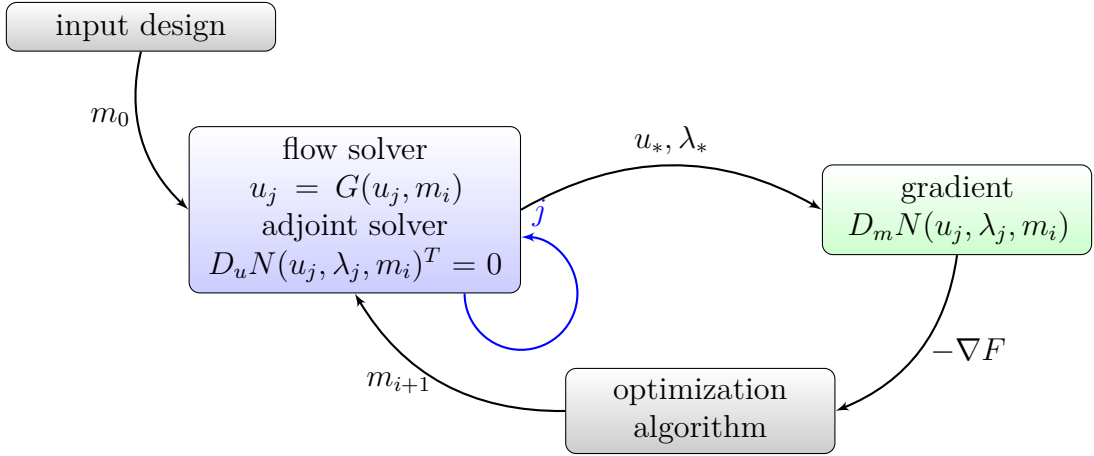

The new driver performs a piggyback iteration, by doing one step of the flow solver and then handing the intermediate result $u_{j}$ to the adjoint solver to do one adjoint step using reverse-mode AD, and returns $\lambda_{j}$. Those two steps are then repeated to perform a combined fixed point iteration. \\
Using these capabilities, we are able to compute all necessary reduced gradients $\Dred{p} F, \Dred{p} E_{k}, \Dred{p} C_{l}$ from Algorithm \ref{algoneshotdircon} with the SU2 adjoint framework. Please keep in mind that these principles apply to all AD-based adjoint solvers when used for One Shot optimization. By using a consistent piggyback implementation we ensure that function and gradient values, while inaccurate, are still computed consistently with one another.

\subsection{Optimizer implementation}

In Section \ref{secoptalg}, we introduced an SQP optimization strategy using Sobolev gradient smoothing for design parameters to compute an approximated Hessian. Here, we will discuss the implementation of Algorithms \ref{eqconstSQP}, \ref{constSQP}, and \ref{algoneshotdircon}. \\
The flow and adjoint simulations in this paper are done using the SU2 framework. SU2 already provides a number of python scripts to simplify various tasks, by a python interface. These scripts allow the computation of flow and adjoint simulations, using the different SU2 C++ executables, and the evaluation of functionals and gradients, respectively. The results can be handed to the SciPy optimization library \cite{2020SciPy-NMeth} by the interface. However, the provided optimization capabilities are insufficient to run Algorithms \ref{constSQP} and \ref{algoneshotdircon} for several reasons. The major concern is that we want to provide an approximated Hessian to the SQP optimizer, while most software packages either expect an exact Hessian or numerically approximate the Hessian themselves. \\
To be able to provide our Hessian approximation we decided to implement the core iterative loop of the SQP method ourselves and use existing packages only to solve the individual steps of the optimization algorithms. To achieve this we use the FADO toolbox\footnote{FADO - Framework for Aerostructural Design Optimization, \url{https://github.com/pcarruscag/FADO}}. This toolbox provides a python interface to call SU2 simulations, to store and control different designs and meshes, or to adapt config options. It is deliberately open for the addition of different optimizers and we added an implementation of the basic SQP algorithm to run our process. In addition, in Algorithms \ref{constSQP} and \ref{algoneshotdircon} we have to deal with a more complex constrained quadratic program. For this, we use the CVXOPT python package\footnote{CVXOPT - Python Software for Convex Optimization, \url{https://cvxopt.org}} for convex programming \cite{cvxopt10, cvxopt11}. The Hessian approximation in Equation \eqref{systemmatrixequation} is symmetric, positive definite, which implies that the quadratic subproblems for SQP are convex, so this is a valid choice. \\

 \bibliographystyle{elsarticle-num}
 \bibliography{Bibliography/Bibliography.bib}

\end{document}